 \newtheorem{thm}{Theorem}[section]
 \newtheorem{cor}[thm]{Corollary}
 \newtheorem{lem}[thm]{Lemma}
\newtheorem{question}[thm]{Question}
 \newtheorem{prop}[thm]{Proposition}
\newtheorem{prob}[thm]{Problem}
 \theoremstyle{definition}
 \newtheorem{defn}[thm]{Definition}
 \theoremstyle{remark}
\newtheorem{ex}[thm]{Example}
 \numberwithin{equation}{section}
\newcommand{\mC}{\ensuremath{\mathbb{C}}}
\newcommand{\mD}{\ensuremath{\mathbb{D}}}
\newcommand{\mN}{\ensuremath{\mathbb{N}}}
\newcommand{\cM}{\ensuremath{\mathcal{M}}}
\newcommand{\cF}{\ensuremath{\mathcal{F}}}
\newcommand{\cH}{\ensuremath{\mathcal{H}}}
\begin{document}

%
%
%
%
%
%
%
%
%

\title[A Normal Criterion Concerning Sequence of Functions...]
 {A Normal Criterion Concerning Sequence of Functions and their Differential Polynomials}

\author[N. Bharti]{Nikhil Bharti}

\address{%
Department of Mathematics\\
University of Jammu\\
Jammu-180006, India}
\email{nikhilbharti94@gmail.com}


\subjclass{Primary 30D45; Secondary 30D30, 30D35, 34M05}

\keywords{Meromorphic functions, Normal families, Differential polynomials, Value distribution theory}


\begin{abstract}
In this paper, a normality criterion concerning a sequence of meromorphic functions and their differential polynomials is obtained. Precisely, we have proved: Let $\left\{f_j\right\}$ be a sequence of meromorphic functions in the open unit disk $\mD$ such that, for each $j,$ $f_j$ has poles of multiplicity at least $m,~m\in\mN.$ Let $\left\{h_j\right\}$ be a sequence of holomorphic functions in $\mD$ such that $h_j\rightarrow h$ locally uniformly in $\mD,$ where $h$ is holomorphic in $\mD$ and $h\not\equiv 0.$ Let $Q[f_j]$ be a differential polynomial of $f_j$ having degree $\lambda_Q$ and weight $\mu_Q.$ If, for each $j,$ $f_j(z)\neq 0$ and $Q[f_j]-h_j$ has at most $\mu_Q + \lambda_Q(m-1)-1$ zeros, ignoring multiplicities, in $\mD,$ then $\left\{f_j\right\}$ is normal in $\mD.$ 
\end{abstract}

\maketitle

\section{Introduction}
In what follows, $\mathcal{H}(D)$ and $\mathcal{M}(D)$ are the classes of all holomorphic and meromorphic functions in the domain $D\subseteq\mC,$ respectively. A family $\mathcal{F}\subset\cM(D)$ is said to be {\it normal} in $D$ if every sequence of functions in $\mathcal{F}$ has a subsequence which converges locally uniformly in $D$ with respect to the spherical metric to a limit function which is either meromorphic in $D$ or the constant $\infty.$ In case, if $\mathcal{F}\subset\cH(D),$ then the Euclidean metric can be substituted for the spherical metric (see \cite{schiff, zalcman}). The idea of normal family is attributed to Paul Montel \cite{montel-1, montel-2, montel-3}. Ever since its creation, the theory of normal families has been a cornerstone of complex analysis with far reaching applications in dynamics of rational as well as transcendental maps, function theory of one and several variables, bicomplex analysis, complex projective geometry, functional analysis etc. (see \cite{aladro, beardon-minda, KRN-1, KRN-2, fujimoto, kim, wu, zalcman}). The theory of normal families continues to be an active area of research. 

The main purpose of this paper is to study the normality of a sequence of non-vanishing meromorphic functions in a domain $D\subseteq\mC$ whose differential polynomials have non-exceptional holomorphic functions in $D.$ For $f,~g\in\cM(D),$ if $f(z)-g(z)\neq 0$ in $D,$ then $g$ is said to be an exceptional function of $f$ in $D.$ On the other hand, if there exist at least one $z\in D$ for which $f(z)-g(z)=0,$ then $g$ is said to be a non-exceptional function of $f$ in $D.$ If $g$ happens to be a constant, say $k,$ then $k$ is said to be an exceptional (respectively, non-exceptional) value of $f$ in $D.$

\medskip

\begin{defn}\cite{grahl}
Let $k\in\mN$ and $n_0, n_1,\ldots, n_k$ be non-negative integers,  not all zero. Let $f\in\mathcal{M}(D).$ Then the product $$M[f]:=a\cdot\prod\limits_{j=0}^{k}\left(f^{(j)}\right)^{n_j}$$ is called a differential monomial of $f,$ where $a~(\not\equiv 0, \infty)\in\mathcal{M}(D).$ If $a\equiv 1,$ then $M[f]$ is said to be a normalized differential monomial of $f.$ The quantities $$\lambda_M:=\sum\limits_{j=0}^{k}n_j \mbox{ and } \mu_M:=\sum\limits_{j=0}^{k}(j+1)n_j$$ are called the degree and weight of the differential monomial $M[f],$ respectively. 

For $1\leq i\leq m,$ let $M_i[f]=\prod\limits_{j=0}^{k}\left(f^{(j)}\right)^{n_{ji}}$  be $m$ differential monomials of $f.$ Then the sum $$P[f]:= \sum\limits_{i=1}^{m}a_iM_i[f]$$ is called a differential polynomial of $f$ and the quantities $$\lambda_P:=\max\left\{\lambda_{M_i}: 1\leq i\leq m\right\} \mbox{ and } \mu_P:=\max\left\{\mu_{M_i}: 1\leq i\leq m\right\}$$ are called the degree and weight of the differential polynomial $P[f],$ respectively. If $\lambda_{M_1}=\lambda_{M_2}=\cdots=\lambda_{M_m},$ then $P[f]$ is said to be a homogeneous differential polynomial.
\end{defn}

\medskip

In the present work, we are concerned with the homogeneous differential polynomials of the form 

\begin{equation}\label{eqn:3}
Q[f]:=f^{x_0}(f^{x_1})^{(y_1)}(f^{x_2})^{(y_2)}\cdots(f^{x_k})^{(y_k)},
\end{equation}
where $x_0,~x_1,~\ldots,~x_k,~y_1,~y_2,~\ldots,~y_k$ are non-negative integers such that $x_i\geq y_i$ for $i=1,~2,~\ldots,~k.$\\
The differential polynomial \eqref{eqn:3} first appeared in the literature in \cite{dethloff} and has been used extensively since then, particularly in finding normality criteria of families of meromorphic functions (see \cite{thin-1, thin-2, thin-3}).\\ 
Set $x'=\sum\limits_{i=1}^{k} x_i$ and  $y'=\sum\limits_{i=1}^{k}y_i.$ Further, we assume that $x_0>0$ and $y'>0.$ Using the generalized Leibniz rule for derivatives, one can easily verify that $$(f^{x_i})^{(y_i)}=\sum\limits_{n_1+n_2+\cdots+n_{x_i}=y_i} \frac{y_i!}{n_1!n_2!\cdots n_{x_i}!}f^{(n_1)}f^{(n_2)}\cdots f^{(n_{x_i})},$$ where $n_i$'s are non-negative integers. Thus the degree of $Q[f],$ $\lambda_Q=x_0+x'$ and the weight of $Q[f],$ $\mu_Q=x_0+x'+y'=\lambda_Q+y'.$

\section{A Glimpse of the Past}
In $1907,$ Montel \cite{montel-1} proved that a locally uniformly bounded sequence of holomorphic functions in a domain has a locally uniformly convergent subsequence. Montel called a family of holomorphic functions exhibiting such behaviour, a {\it normal family} in his subsequent paper \cite{montel-2} in $1912.$ However, the fundamental ideas related to the study of normal convergence were developed long before Montel. In $1860$s, Weierstrass established that a uniform limit of a sequence of holomorphic functions in a domain is also holomorphic (see \cite{remmert}). In $1894,$ Stieltjes \cite{stieltjes} proved that a uniformly bounded sequence of holomorphic functions in a domain converging uniformly in some open subset of the domain, also converges uniformly on compacta. These results were further refined by Osgood \cite{osgood}, Vitali \cite{vitali-1, vitali-2} and Porter \cite{porter-1, porter-2} in early $1900$s. Montel's definition of normality of a family of holomorphic functions was later extended by Carath$\acute{e}$odary and Landau \cite{caratheodory-landau} to a family of meromorphic functions by means of local uniform convergence in the spherical metric. As a matter of fact, it was pointed out by Ostrowski \cite{ostrowski} that normality can be equivalently defined by using the spherical metric. 
In $1967,$ Wu \cite{wu} studied normal families of holomorphic mappings between complex manifolds. Later, Fujimoto \cite{fujimoto} introduced the concept of meromorphically normal families of meromorphic mappings in complex projective spaces. In $1969,$ Drasin \cite{drasin} introduced the value distribution theory of Nevanlinna in the study of normal families. For a detailed account on the history of the theory of normal families, the reader is referred to Zalcman \cite{zalcman} and Beardon and Minda \cite{beardon-minda}.\\
A major breakthrough in the theory of normal families happened in $1975$ when Zalcman \cite{zalcman-1} proved his famous rescaling lemma, known as the Zalcman's lemma, which unlike normal families, gave a characterization of non-normal families in a domain. This lemma provided an easy approach to obtain normality of a family of functions using {\it reductio ad absurdum} and has revolutionized the theory of normal families. Several extensions of Zalcman's lemma in one and several variables have been obtained (see Pang and Zalcman \cite{pang-zalcman}, Aladro and Krantz \cite{aladro}, Charak and Sharma \cite{charak-1} and Dovbush \cite{dovbush} ). Kim and Krantz \cite{kim} considered normal families of holomorphic functions and mappings on an infinite dimensional Banach space. Charak et al. \cite{KRN-1, KRN-2} introduced normal families of bicomplex holomorphic and meromorphic functions. Recently, Krantz \cite{krantz} investigated normal families of holomorphic mappings from the point of view of invariant geometry and Chang et al. \cite{chang-xu-yang} studied normal families of meromorphic functions in several complex variables.

\section{Motivation and Main Result}
In \cite[Problem 5.11]{hayman-1}, Hayman posed the following problem:

\begin{prob}\label{prob:1}
Let $\cF\subset\cM(D)$ and $k$ be a positive integer. Suppose that for each $f\in\cF,$ $f(z)\neq 0,$ $f^{(k)}\neq 1.$ Then what can be said about the normality of $\cF$ in $D?$
\end{prob}

Gu \cite{gu} considered Problem \ref{prob:1} and confirmed that the family $\cF$ is indeed normal in $D.$ Subsequently, Yang \cite{yang} proved that the exceptional value $1$ of $f^{(k)}$ can be replaced by an exceptional holomorphic function. Chang \cite{chang} considered the case when $f^{(k)}-1$ has limited number of zeros and obtained the normality of $\cF.$ Thin and Oanh \cite{thin-3} replaced $f^{(k)}$ with a differential polynomial of $f.$ Later, Deng et al. \cite{deng} established that there is no loss of normality even when $f^{(k)}-h$ has zeros for some $h\in\cH(D)$ as long as the number of zeros are bounded by the constant $k.$ Chen et al. \cite{chen} took a sequence of exceptional holomorphic functions instead of single exceptional holomorphic function. Recently, Deng et al. \cite{deng-1} proved the following theorem concerning a sequence of meromorphic functions:

\begin{thm}\label{thm:deng}
Let $\left\{f_j\right\}\subset\mathcal{M}(D)$ and $\left\{h_j\right\}\subset\cH(D)$ be sequences of functions in $D.$ Assume that $h_j\rightarrow h$ locally uniformly in $D,$ where $h\in\cH(D)$ and $h\not\equiv 0.$ Let $k$ be a positive integer. If, for each $j,$ $f_j(z)\neq 0$ and  $f_{j}^{(k)}-h_j(z)$ has at most $k$ distinct zeros, ignoring multiplicities, in $D,$ then $\left\{f_j\right\}$ is normal in $D.$
\end{thm}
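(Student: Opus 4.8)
The plan is to argue by contradiction via the rescaling method, treating separately the points where $h$ does and does not vanish. Since normality is a local property and the zeros of $h$ are isolated (because $h\not\equiv0$), it suffices to prove normality at each $z_0\in D$. Suppose first that $h(z_0)\neq0$ and that $\{f_j\}$ fails to be normal at $z_0$. Because every $f_j$ omits the value $0$, the hypotheses of the Pang--Zalcman rescaling lemma concerning zeros hold vacuously for any multiplicity threshold; applying the lemma with a threshold exceeding $k$ legitimizes the exponent $\alpha=k$ and produces a subsequence (still written $\{f_j\}$), points $z_j\to z_0$, and numbers $\rho_j\to0^+$ with
\[
g_j(\zeta):=\rho_j^{-k}f_j(z_j+\rho_j\zeta)\longrightarrow g(\zeta)
\]
locally uniformly in the spherical metric, where $g$ is a non-constant meromorphic function on $\mC$. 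By Hurwitz's theorem $g$ is zero-free. Since $g_j^{(k)}(\zeta)=f_j^{(k)}(z_j+\rho_j\zeta)$ and $h_j(z_j+\rho_j\zeta)\to h(z_0)=:c\neq0$, the functions $g_j^{(k)}(\zeta)-h_j(z_j+\rho_j\zeta)$ converge to $g^{(k)}-c$; each has at most $k$ distinct zeros, so a second use of Hurwitz gives that $g^{(k)}-c$ (which is not identically zero, else $g$ would be a non-constant polynomial, contradicting $g\neq0$) has at most $k$ distinct zeros in $\mC$.

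The heart of the matter is to rule this out. If $g$ is transcendental, then, $g$ being zero-free and $c\neq0$, a Hayman-type theorem on the value distribution of derivatives forces $g^{(k)}-c$ to have infinitely many zeros, contradicting the bound $k$. If $g$ is rational, then $g=a/Q$ with $a\in\mC\setminus\{0\}$ and $Q$ a polynomial of degree $d\geq1$ having $t$ distinct roots; computing orders at the poles and at $\infty$ shows that $g^{(k)}-c$ has exactly $d+kt$ zeros counting multiplicity, while $g^{(k+1)}=(g^{(k)}-c)'$ has exactly $(k+1)(t-1)$ such zeros. Writing $s$ for the number of distinct zeros of $g^{(k)}-c$, every zero of multiplicity $\nu\geq2$ is a zero of $g^{(k+1)}$ of multiplicity $\nu-1$, so
\[
(d+kt)-s\leq(k+1)(t-1),
\]
whence $s\geq d-t+k+1\geq k+1$, contradicting $s\leq k$. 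The sharp bound $k$ in the hypothesis is exactly what forces this contradiction. Thus $\{f_j\}$ is normal on $D\setminus h^{-1}(0)$.

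It remains to extend normality across each isolated point $z_0\in h^{-1}(0)$. I would fix a small disk $\Delta$ about $z_0$ with $h\neq0$ on $\Delta\setminus\{z_0\}$ and pass to a subsequence converging spherically on $\Delta\setminus\{z_0\}$ to some $\phi$, exploiting that $F_j:=1/f_j$ is holomorphic on $\Delta$ since $f_j\neq0$. When $\phi\not\equiv0$ one selects a circle on which $1/\phi$ is finite and nonzero; there the $F_j$ are uniformly bounded, hence bounded on all of $\Delta$ by the maximum principle, which yields convergence of $\{f_j\}$ across $z_0$ (the case $\phi\equiv\infty$ being similar with $F_j\to0$). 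In the remaining case $\phi\equiv0$, non-normality at $z_0$ would force poles of $f_j$ to accumulate there; since $f_j^{(k)}-h_j\to-h\neq0$ on a circle about $z_0$, the argument principle applied to $f_j^{(k)}-h_j$ on the enclosed disk shows that the zeros minus poles equal the order of $h$ at $z_0$, and the poles contributed by the clustering poles of $f_j$ then force $f_j^{(k)}-h_j$ to have more zeros there than the hypothesis allows, the desired contradiction.

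I expect two points to be genuinely delicate. The first is the value-distribution input that excludes a transcendental extremal $g$ (or, equivalently, reduces $g$ to a rational function), which rests on the sharp form of Hayman's inequality for derivatives of meromorphic functions omitting zero. The second, and harder, is the degenerate case $\phi\equiv0$ in the extension step: one must convert a count of zeros \emph{with multiplicity}, obtained from the argument principle, into a count of \emph{distinct} zeros, and this requires care because the additional zeros of $f_j^{(k)}-h_j$ emerge at a scale intermediate between $\rho_j$ and $1$ and are therefore invisible to the single rescaling used in the main argument.
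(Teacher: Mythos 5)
Your first half --- normality at the points where $h$ does not vanish --- is correct, and it is essentially the argument that the paper's machinery specializes to. (Note that the paper does not prove Theorem \ref{thm:deng} itself; it quotes it from Deng--Yang--Fang \cite{deng-1} and proves the generalization Theorem \ref{thm:1}, whose proof follows exactly this blueprint.) Your step is the analogue of Lemma \ref{lem:3}: Zalcman--Pang with $\alpha=k$, legitimate since each $f_j$ omits $0$; two applications of Hurwitz; Hayman's alternative to exclude a transcendental limit, which is the role played by Corollary \ref{cor:1} for $Q[f]$; and the Chang-type count for a zero-free rational limit $g=a/Q$, where your computation $s\geq d-t+k+1\geq k+1$ is the $k$-th-derivative instance of Proposition \ref{prop:2}. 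All of this part is sound.

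The genuine gap is exactly where you suspected: the case $\phi\equiv 0$ at a zero $z_0$ of $h$. The argument principle gives, for large $j$, $Z_j-P_j=t$, where $Z_j$ counts the zeros of $f_j^{(k)}-h_j$ in the small disk \emph{with multiplicity}; a single pole of $f_j$ forces $P_j\geq k+1$, hence $Z_j\geq t+k+1$; but the hypothesis bounds only the number of \emph{distinct} zeros by $k$, and nothing prevents all of $Z_j$ from sitting at one point of high multiplicity, so no contradiction results and this step fails as stated. The correct treatment (in \cite{deng-1}, mirrored in the paper's proof of Theorem \ref{thm:1}) is structurally different: one inducts on the order $t$ of the zero of $h$ and rescales at the scale of the zeros of $h_j$ themselves. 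Writing $h_j(z)=\prod_{i}(z-z_{j,i})^{t_i}a_j(z)$ and letting $w_j$ be the zero of largest modulus, one compares $g_j(z)=f_j(w_jz)/w_j^{\,t+k}$ with $v_j(z)=h_j(w_jz)/w_j^{\,t}$; the limit $v$ has the two distinct zeros $0$ and $1$, so every zero of $v$ has order at most $t-1$ and the induction hypothesis applies. The boundary case $h_j(z)=z^{t}a_j(z)$ requires its own lemma (the analogue of Proposition \ref{prop:3}), split according to whether $z_j/\rho_j\to\infty$ or stays bounded, and the residual degenerate configurations are handled by still further rescalings --- at the modulus $\eta_j$ of the smallest pole of $f_j$, and at points $b_j$ where $|f_j(b_j)/b_j^{\,t+k}|=1$ --- together with the Chen--Xu lemma (Lemma \ref{lem:xu}): a family omitting $h$ that is normal on the punctured disk but not at its center has a subsequence converging to $h$ off the puncture. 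In that scheme every zero count is a count of \emph{distinct} zeros transported by Hurwitz to a rescaled limit, never a multiplicity count from the argument principle; this is precisely how the obstruction you identified --- zeros emerging at scales intermediate between $\rho_j$ and $1$, invisible to a single rescaling --- is circumvented: by introducing a rescaling at each such intermediate scale.
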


Following Thin and Oanh \cite{thin-3}, a natural question about Theorem \ref{thm:deng} arises:

\begin{question}\label{que:1}
Does Theorem \ref{thm:deng} remain valid if the $k$-th derivative $f_{j}^{(k)}$ is replaced by a differential polynomial of $f_j?$
\end{question} 
We give an affirmative answer to Question \ref{que:1} in the following result. Since normality is a local property, one can always restrict the domain to the open unit disk $\mD.$

\begin{thm}\label{thm:1}
Let $\left\{f_j\right\}\subset\mathcal{M}(\mD),$ be a sequence such that, for each $j,$ $f_j$ has poles of multiplicity at least $m,~m\in\mN.$ Let $\left\{h_j\right\}\subset\mathcal{H}(\mD)$ be such that $h_j\rightarrow h$ locally uniformly in $\mD,$ where $h\in\cH(\mD)$ and $h\not\equiv 0.$ Let $Q[f_j]$ be a differential polynomial of $f_j$ as defined in \eqref{eqn:3} having degree $\lambda_Q$ and weight $\mu_Q.$ If, for each $j,$ $f_j(z)\neq 0$ and $Q[f_j]-h_j$ has at most $\mu_Q + \lambda_Q(m-1)-1$ zeros, ignoring multiplicities, in $\mD,$ then $\left\{f_j\right\}$ is normal in $\mD.$
\end{thm}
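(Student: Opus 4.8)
The plan is to argue by contradiction using the Pang--Zalcman rescaling lemma, reducing the normality of $\{f_j\}$ to a value-distribution statement for the limit function on $\mC$. Two reductions come first. Since each $f_j$ is non-vanishing, the functions $1/f_j$ are holomorphic on $\mD$ (poles of $f_j$ become zeros of multiplicity at least $m$), and because $w\mapsto 1/w$ is a rotation of the Riemann sphere, $\{f_j\}$ is normal at a point if and only if $\{1/f_j\}$ is; this observation will be used both to keep the relevant family holomorphic and, at the very end, to extend normality across the isolated zeros of $h$. I would fix the scaling exponent $\alpha=y'/\lambda_Q$, which lies in the admissible range $(-m,\infty)$ for families with poles of multiplicity at least $m$ that omit $0$; I also record that $\lambda_Q m-y'>0$, since $x_0>0$ and $x_i\ge y_i$ give $\lambda_Q=x_0+x'\ge x_0+y'>y'\ge y'/m$.

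First I would suppose $\{f_j\}$ is not normal at some $z_0$ with $h(z_0)\ne 0$. Applying the Pang--Zalcman lemma with exponent $\alpha$ produces $z_j\to z_0$, $\rho_j\to 0^+$ and a subsequence for which $g_j(\zeta):=\rho_j^{-\alpha}f_j(z_j+\rho_j\zeta)\to g(\zeta)$ locally uniformly in the spherical metric, where $g$ is a nonconstant meromorphic function on $\mC$ of finite order, non-vanishing, with all poles of multiplicity at least $m$. The exponent is chosen precisely so that the differential polynomial is scale-invariant: using $f_j^{(n)}(z_j+\rho_j\zeta)=\rho_j^{\alpha-n}g_j^{(n)}(\zeta)$ one finds $Q[f_j](z_j+\rho_j\zeta)=Q[g_j](\zeta)$, so that $Q[f_j](z_j+\rho_j\zeta)-h_j(z_j+\rho_j\zeta)\to Q[g](\zeta)-c$ with $c:=h(z_0)\ne 0$, the convergence being locally uniform away from the poles of $g$.

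The heart of the matter is the lemma: for $g$ as above and $c\ne 0$, the function $Q[g]-c$ has at least $\mu_Q+\lambda_Q(m-1)=\lambda_Q m+y'$ distinct zeros in $\mC$. I would first dispose of $Q[g]\equiv c$: then $g$ has no poles, so $g=e^{\psi}$ with $\psi$ nonconstant entire, and writing $Q[g]=e^{\lambda_Q\psi}\prod_i(g^{x_i})^{(y_i)}/g^{x_i}$ the product is a small function by the lemma on the logarithmic derivative, forcing $T(r,e^{\lambda_Q\psi})=S(r,g)$ and hence $g$ constant, a contradiction. For rational $g$ I would write $g=A\big/\prod_s(\zeta-b_s)^{p_s}$ with $p_s\ge m$, $t$ distinct poles and $d=\sum_s p_s\ge mt$; a direct computation gives $Q[g]=N/M$ in lowest terms with $\deg M=\lambda_Q d+y't=:\Delta$ and $\deg N=y'(t-1)$. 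Viewing $Q[g]$ as a degree-$\Delta$ self-map of $\widehat{\mC}$, I read off its ramification over $\infty$ (equal to $\Delta-t$) and at $\zeta=\infty$ over $0$ (equal to $\lambda_Q d+y'-1$); the Riemann--Hurwitz relation $\sum(\mathrm{mult}-1)=2\Delta-2$ then bounds the ramification over $c$ by $(y'+1)(t-1)$, whence the number of distinct solutions of $Q[g]=c$ is at least $\Delta-(y'+1)(t-1)=\lambda_Q d+y'+1-t\ge t(\lambda_Q m-1)+y'+1\ge\lambda_Q m+y'$, using $d\ge mt$ and $t\ge 1$. For transcendental $g$ it suffices to show $Q[g]-c$ has infinitely many zeros; here I would use the estimate $T(r,Q[g])\le\lambda_Q T(r,g)+y'\bar N(r,g)+S(r,g)$ together with the non-vanishing of $g$ and the bound on pole multiplicities, which keep $\bar N(r,Q[g])$ and $\bar N(r,1/Q[g])$ small relative to $T(r,Q[g])$, and a Hayman-type second-main-theorem argument exploiting the finite order of $g$. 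I expect this transcendental estimate to be the main obstacle, since squeezing infinitely many $c$-points out of Nevanlinna theory requires the sharp interplay between the non-vanishing of $g$, the pole-multiplicity hypothesis, and the degree and weight of $Q$; the rational case, by contrast, is settled cleanly by the Riemann--Hurwitz count above.

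With the lemma in hand the contradiction is immediate: selecting $\lambda_Q m+y'$ distinct zeros of $Q[g]-c$ and invoking Hurwitz's theorem forces $Q[f_j]-h_j$ to have at least $\lambda_Q m+y'=\mu_Q+\lambda_Q(m-1)$ distinct zeros in $\mD$ for all large $j$, contradicting the hypothesis of at most $\mu_Q+\lambda_Q(m-1)-1$. This yields normality on $\mD\setminus Z$, where $Z$ is the discrete zero set of $h$. Finally I would extend normality across each $z^\ast\in Z$: the functions $1/f_j$ are holomorphic on $\mD$ and, by the previous step, $\{1/f_j\}$ is normal on a punctured neighbourhood of $z^\ast$, so local uniform bounds on a small circle about $z^\ast$ propagate inward by the maximum principle, giving normality at $z^\ast$ as well and completing the proof.
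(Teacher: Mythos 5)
Your argument up to and including the case $h(z_0)\neq 0$ is sound and essentially reproduces the paper's Lemma \ref{lem:3}: the exponent $\alpha=y'/\lambda_Q=(\mu_Q-\lambda_Q)/\lambda_Q$ is exactly the one the paper uses, the scale-invariance $Q[f_j](z_j+\rho_j\zeta)=Q[g_j](\zeta)$ is correct, and your zero-counting lemma is the analogue of the paper's Proposition \ref{prop:2} together with Corollary \ref{cor:1}. Your Riemann--Hurwitz count in the rational case is a genuinely different and rather cleaner route than the paper's coefficient-comparison/Vandermonde-rank argument, and the arithmetic checks out: $\Delta-(y'+1)(t-1)=\lambda_Q d+y'+1-t\geq \mu_Q+\lambda_Q(m-1)$. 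Note, however, that the paper needs this count for a \emph{polynomial} target $p$, not just a constant $c$; your version only covers constants, which would suffice only if your final step worked.

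It does not, and that is the genuine gap. Having obtained normality on $\mD\setminus Z$, you extend across a zero $z^\ast$ of $h$ by asserting that bounds for the holomorphic functions $1/f_j$ on a small circle about $z^\ast$ propagate inward by the maximum principle. But normality of $\{1/f_j\}$ on the punctured neighbourhood gives no such bounds: a subsequence may satisfy $1/f_j\rightarrow\infty$ locally uniformly there (equivalently $f_j\rightarrow 0$ off $z^\ast$), in which case there is no uniform bound on any circle, and you cannot conclude $1/f_j\rightarrow\infty$ inside the circle either, since $1/f_j$ may have zeros there --- precisely the poles of $f_j$, which can accumulate at $z^\ast$. Ruling out this scenario (poles of $f_j$ drifting into the zero of $h$ while $f_j\rightarrow 0$ elsewhere) is the actual content of the theorem beyond Lemma \ref{lem:3}, and it cannot be done by soft arguments: one must use the hypothesis on the zeros of $Q[f_j]-h_j$ again, via new rescalings calibrated to the vanishing order $t$ of $h$ at $z^\ast$ (exponent $(t+\mu_Q-\lambda_Q)/\lambda_Q$ rather than $(\mu_Q-\lambda_Q)/\lambda_Q$), a dichotomy on $z_j/\rho_j$, and an induction on $t$ tracking how the zeros of $h_j$ split as $j\rightarrow\infty$. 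This is what the paper's Proposition \ref{prop:3} and the whole of Section 5 (the auxiliary families $\psi_j$, $\phi_j$, $K_j$ and Lemma \ref{lem:xu}) are for, and it is also where the polynomial-target version of the zero-counting lemma (target $\zeta^t$ or $v(\zeta)$, not a constant) becomes indispensable. Also beware that at $z^\ast$ you cannot simply rerun your first rescaling, since the limit target would be $h(z^\ast)=0$ and the conclusion fails for $c=0$ (cf.\ the paper's Example with $f_j(z)=e^{jz}$, $h_j\equiv 0$).
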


It is important to note that the multiplicity of poles of $f_j$ in Theorem \ref{thm:1} is considered only to relax the upper bound for the zeros of $Q[f_j]-h_j$ in $\mD.$ In fact, Theorem \ref{thm:1} can be proved without taking the multiplicity of poles of $f_j$ into account. In such case, the hypothesis that $Q[f_j]-h_j$ has at most $\mu_Q + \lambda_Q(m-1)-1$ zeros, ignoring multiplicities, in $\mD,$ is replaced by the hypothesis that $Q[f_j]-h_j$ has at most $\mu_Q-1$ zeros, ignoring multiplicities, in $\mD.$

As a direct consequence of Theorem \ref{thm:1}, we have

\begin{cor}
Let $\left\{f_j\right\}\subset\mathcal{M}(\mD)$ and $\left\{h_j\right\}\subset\mathcal{H}(\mD)$ be such that $h_j\rightarrow h$ locally uniformly in $\mD,$ where $h\in\cH(D)$ and $h\not\equiv 0.$ If, for each $j,$ $f_j(z)\neq 0$ and $Q[f_j](z)\neq h_j(z),$ then $\left\{f_j\right\}$ is normal in $\mD.$
\end{cor}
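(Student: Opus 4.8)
The Corollary is the instance of Theorem~\ref{thm:1} in which $Q[f_j]-h_j$ is \emph{zero-free}: taking $m=1$ (legitimate for any $f_j\in\cM(\mD)$), the tolerated number of zeros becomes $\mu_Q+\lambda_Q(m-1)-1=\mu_Q-1\ge 0$, and a nowhere-vanishing difference meets this bound vacuously. So it suffices to indicate how Theorem~\ref{thm:1} itself is proved. The plan is a Pang--Zalcman rescaling argument by contradiction: assuming non-normality at some $z_0\in\mD$, I would rescale to produce a single limit function on $\mC$ whose existence is then refuted. Note first that $y'>0$ forces some $y_i\ge1$, hence $x_i\ge1$ and $x'\ge1$, so $\lambda_Q=x_0+x'\ge2$. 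Writing $g_j(\zeta)=\rho_j^{-\alpha}f_j(z_j+\rho_j\zeta)$ with $\alpha=y'/\lambda_Q>0$, the Leibniz expansion of \eqref{eqn:3} gives $Q[f_j](z_j+\rho_j\zeta)=\rho_j^{\alpha\lambda_Q-y'}Q[g_j](\zeta)=Q[g_j](\zeta)$, so the nonlinear part is scale-invariant for this $\alpha$. Since the $f_j$ are zero-free (no upper restriction on $\alpha$) with poles of multiplicity $\ge m$ (any $\alpha>-m$ permitted), the Pang--Zalcman lemma applies along a subsequence and yields $z_j\to z_0$, $\rho_j\to0^+$, and $g_j\to g$ locally uniformly in the spherical metric, with $g$ non-constant meromorphic on $\mC$. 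By Hurwitz and the preservation of multiplicities, $g$ is zero-free and all its poles have multiplicity $\ge m$. Putting $c=h(z_0)$ and \emph{assuming $c\ne0$} (the zeros of $h$ are treated at the end), local uniform convergence gives $Q[g_j]-h_j(z_j+\rho_j\cdot)\to Q[g]-c$, and a disjoint-disc Hurwitz argument shows $Q[g]-c\not\equiv0$ has at most $N:=\mu_Q+\lambda_Q(m-1)-1=\lambda_Q m+y'-1$ distinct zeros in $\mC$.

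Suppose first $g$ is rational. Being non-constant and zero-free it is $g=a/\prod_{l=1}^{s}(z-z_l)^{p_l}$ with each $p_l\ge m$; set $d=\sum_l p_l\ (\ge ms)$. A local computation at the $z_l$ and at $\infty$ shows that $Q[g]$ has denominator of degree $\lambda_Q d+y's$, poles of order $p_l\lambda_Q+y'$, and a zero of order $\lambda_Q d+y'$ at $\infty$; hence $Q[g]-c$ has numerator of degree exactly $\lambda_Q d+y's$. Viewing $Q[g]\colon\mathbb{P}^1\to\mathbb{P}^1$ as a map of degree $d^\ast=\lambda_Q d+y's$ and applying Riemann--Hurwitz, the large forced ramification over $0$ (from the zero of order $\lambda_Q d+y'$ at $\infty$, together with the at most $y'(s-1)$ finite zeros) and over $\infty$ (from the $s$ poles) bounds the ramification over $c$ by $(s-1)(y'+1)$. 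Therefore the number of \emph{distinct} $c$-points is at least $d^\ast-(s-1)(y'+1)=\lambda_Q d-s+y'+1\ge\lambda_Q m+y'=N+1$, using $d\ge ms$; this contradicts the previous paragraph.

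Suppose instead $g$ is transcendental, and set $\psi=Q[g]=g^{\lambda_Q}\prod_{i}(g^{x_i})^{(y_i)}/g^{x_i}$. The lemma on logarithmic derivatives gives $m(r,\psi/g^{\lambda_Q})=S(r,g)$, while $g$ zero-free makes $1/g^{\lambda_Q}$ entire with $m(r,1/g^{\lambda_Q})=\lambda_Q T(r,g)+O(1)$; hence $\lambda_Q T(r,g)\le m(r,1/\psi)+S(r,g)=T(r,\psi)-N(r,1/\psi)+S(r,g)$. The poles of $\psi$ are exactly those of $g$, so $\overline N(r,\psi)=\overline N(r,g)\le\frac1m N(r,g)\le\frac1m T(r,g)$, and $\overline N(r,1/(\psi-c))=O(\log r)$. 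Feeding these into the Second Main Theorem for $\psi$ with targets $0,c,\infty$ gives
\[
\lambda_Q T(r,g)+N(r,1/\psi)\le T(r,\psi)+S(r,g)\le \overline N(r,\psi)+\overline N(r,1/\psi)+S(r,g)\le \tfrac1m T(r,g)+N(r,1/\psi)+S(r,g).
\]
Cancelling $N(r,1/\psi)$ leaves $\bigl(\lambda_Q-\tfrac1m\bigr)T(r,g)\le S(r,g)$, which is impossible since $\lambda_Q\ge2>\tfrac1m$. (If $g$ is transcendental \emph{entire} the same line applies with $\overline N(r,g)=0$.)

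It remains to propagate normality across the discrete set $h^{-1}(0)$, the only possible non-normal points left after the above. Near an isolated zero $z_0$ of $h$ the family is normal on the punctured disc, so a subsequence converges there to some $F$; because each $f_j$ is zero-free, $F$ is either $\equiv\infty$, or meromorphic and (by Hurwitz) zero-free or $\equiv0$, and in every case the singularity at $z_0$ is non-essential, so the convergence extends to $z_0$ and normality holds there. The main obstacle is the sharpness of the constant $N=\mu_Q+\lambda_Q(m-1)-1$: this is exactly what compels the rational case to be settled \emph{with multiplicities}, i.e.\ the Riemann--Hurwitz bookkeeping that converts the forced ramification over $0$ and $\infty$ into a lower bound of $N+1$ on the number of \emph{distinct} $c$-points; pinning down distinct (rather than counted) solutions is the delicate step.
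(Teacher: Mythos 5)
Your first sentence is exactly the paper's own proof of the Corollary: it is stated as a direct consequence of Theorem~\ref{thm:1}, obtained by taking $m=1$ and observing that a zero-free difference trivially has at most $\mu_Q-1$ zeros. Had you stopped there, the proposal would match the paper verbatim. Since you instead take on the proof of Theorem~\ref{thm:1}, two remarks are in order. Your rescaling with $\alpha=(\mu_Q-\lambda_Q)/\lambda_Q=y'/\lambda_Q$ and the rational/transcendental dichotomy reproduce the paper's Lemma~\ref{lem:3} and Corollary~\ref{cor:1}. Your count of distinct $c$-points of $Q[g]$ for rational zero-free $g$ via Riemann--Hurwitz is a genuinely different route from the paper's Proposition~\ref{prop:2}, which writes $Q[f]-p$ explicitly, takes logarithmic derivatives, and extracts the bound from the rank of a Vandermonde-type system; your degree/ramification bookkeeping checks out ($d^\ast=\lambda_Q d+y's$, ramification over $0$ at least $\lambda_Q d+y'-1$, over $\infty$ equal to $d^\ast-s$, hence at least $\lambda_Q d+y'-s+1\ge \lambda_Q m+y'$ distinct $c$-points) and is arguably cleaner, though it only covers the polynomial target $p$ being the constant $c$ arising from the blow-up, which is all that is needed at that stage.

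The genuine gap is your last paragraph, the propagation of normality across $h^{-1}(0)$. The claim that normality on the punctured disc plus zero-freeness of the $f_j$ forces the limit's singularity to be removable ``so normality holds there'' is false: the family $f_j(z)=1/(jz)$ of Example~\ref{exp:d3} is zero-free, converges to $0$ locally uniformly on $\mD\setminus\{0\}$, yet is not normal at $0$ (by Marty, $f_j^{\#}(0)=j\to\infty$). Different subsequences converging on the punctured disc tell you nothing about equicontinuity at the puncture, and this is precisely the regime the rescaling argument cannot reach, since at a zero of $h$ the limit equation degenerates to $Q[g]=0$ and yields no contradiction (compare the paper's Example with $h\equiv 0$). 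The paper's actual treatment of $h^{-1}(0)$ is the bulk of the work: Proposition~\ref{prop:3} handles $h(z)=z^t\cdot(\mbox{nonvanishing})$ by a second rescaling in the ratio $z_j/\rho_j$, and Section~5 runs an induction on $t$ with three further auxiliary families ($g_j$, $\psi_j$, $K_j$ rescaled by $w_j$, $\eta_j$, $b_j$) together with Lemma~\ref{lem:xu} to rule out non-normality at the common zero. Since the Corollary assumes only $h\not\equiv 0$, so that $h$ may well vanish in $\mD$, this missing step is needed even for the statement you are proving; as written, your argument only establishes normality off the zero set of $h$.
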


In the following, we show that the condition `$f_j(z)\neq 0$' in Theorem \ref{thm:1} is essential.

\begin{ex}
Consider a sequence $\left\{f_j\right\}\subset\cM(\mD)$ given by $f_j(z)=jz,~j\in\mN,~j\geq 2.$ Let $Q[f_j]:=f_jf_j'$ so that $\mu_Q=3,$ and let $h_j(z)=z.$ Then $h_j\rightarrow z\not\equiv 0$ and $Q[f_j](z)-h_j(z)$ has at most one zero in $\mD.$ However, $\left\{f_j\right\}$ is normal in $\mD.$
\end{ex}

Taking $h_j(z)=1/z$ in Example \ref{exp:d3}, we find that $h_j$ cannot be meromorphic in $\mD.$ Furthermore, the condition `$h\not\equiv 0$' in Theorem \ref{thm:1} cannot be dropped as demonstrated by the following example:

\begin{ex}
Let $\left\{f_j\right\}\subset\cM(\mD)$ such that $f_j(z)=e^{jz},~j\in\mN$ and let $h_j\equiv 0$ so that $h_j\rightarrow h\equiv 0.$ Let $Q[f_j]$ be any differential polynomial of $f_j$ of the form \eqref{eqn:3}. Clearly, $Q[f_j](z)-h(z)$ has no zero in $\mD.$ But the sequence $\left\{f_j\right\}$ is not normal in $\mD.$  
\end{ex}	

The following example establishes the sharpness of the condition `$Q[f_j]-h_j$ has at most $\mu_Q + \lambda_Q(m-1)-1$ distinct zeros in $\mD$' in Theorem \ref{thm:1}:

\begin{ex}\label{exp:d3}
Let $\left\{f_j\right\}\subset\cM(\mD)$ such that $$f_j(z)=\frac{1}{jz},~j\geq 3,~j\in\mN$$ and let $Q[f_j]:=f_jf_j'.$ Then $\lambda_Q=2,$ $\mu_Q=3,~m=1$ and $Q[f_j](z)=-1/j^2z^3.$ Consider $h_j(z)=1/(z-1)^3$ so that $\left\{h_j\right\}\in\mathcal{H}(\mD)$ and $h_j\rightarrow 1/(z-1)^3\not\equiv 0.$ Then by simple calculations, one can easily see that $Q[f_j](z)-h_j(z)$ has exactly $\mu_Q + \lambda_Q(m-1)=3$ distinct zeros in $\mD.$ However, the sequence $\left\{f_j\right\}$ is not normal in $\mD.$
\end{ex}

\section{Auxiliary Results}
What follow are the preparations for the proof of the main result. Here, we assume that the reader is familiar with standard notations of Nevanlinna's value distribution theory like $m(r,f),~N(r,f),~T(r,f),~S(r,f)$ (see \cite{hayman, lo}). Recall that a function $g\in\cM(\mC)$ is said to be a small function of $f\in\cM(\mC)$ if $T(r,g)=S(r,f)$ as $r\rightarrow\infty,$ possibly outside a set of finite Lebesgue measure.

\medskip

\begin{flushleft}
{\bf Notation:} By $D_r(a),$ we mean an open disk in $\mC$ with center $a$ and radius $r.$ $\mD=D_1(0)$ is the open unit disk in $\mC.$
\end{flushleft}

\medskip 

The following lemma is an extension of the Zalcman-Pang Lemma due to Chen and Gu \cite{chen-gu} (cf. \cite[Lemma 2]{pang-zalcman}). 

\begin{lem}[Zalcman-Pang Lemma]\label{lem:zp}
Let $\mathcal{F}\subset\mathcal{M}(\mD)$ be such that each $f\in\cF,$ has zeros of multiplicity at least $m$ and poles of multiplicity at least $p.$ Let $-p<\alpha<m.$ If $\mathcal{F}$ is not normal at $z_0\in\mD,$ then there exist
sequences $\left\{f_j\right\}\subset\mathcal{F},$ $\left\{z_j\right\}\subset\mD$ satisfying $z_j\rightarrow z_0$ and positive numbers $\rho_j$ with $\rho_j\rightarrow 0$ such that the sequence $\left\{g_j\right\}$ defined by $$g_j(\zeta)=\rho_{j}^{-\alpha}f_j(z_j+\rho_j\zeta)\rightarrow g(\zeta)$$ locally uniformly in $\mathbb{C}$ with respect to the spherical metric, where $g$ is a non-constant meromorphic function on $\mathbb{C}$ such that for every $\zeta\in\mathbb{C},$ $g^{\#}(\zeta)\leq g^{\#}(0)=1.$
 \end{lem}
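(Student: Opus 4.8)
The plan is to derive the conclusion from the failure of normality by a maximal-rescaling argument in the spirit of Zalcman's renormalization, using only Marty's criterion as an external tool. Since normality is local, I would first reduce to the case $z_0=0$ and fix a radius $0<r<1$ with $\overline{D_r(0)}\subset\mD$ on which $\cF$ fails to be normal. By Marty's criterion this failure is equivalent to the spherical derivatives $f^{\#}$ not being uniformly bounded over $\cF$ on $\overline{D_r(0)}$; hence there exist $f_j\in\cF$ and points of $D_r(0)$ at which $f_j^{\#}\to\infty$.

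The core of the construction is a boundary-vanishing maximal function that forces the extremum into the interior. In the model case $\alpha=0$ I would set $M_j:=\max_{|z|\le r}(r-|z|)f_j^{\#}(z)$, attained at some $z_j\in D_r(0)$, observe $M_j\to\infty$, and put $\rho_j:=1/f_j^{\#}(z_j)=(r-|z_j|)/M_j\to 0$. Writing $g_j(\zeta):=f_j(z_j+\rho_j\zeta)$, a direct computation gives $g_j^{\#}(\zeta)=f_j^{\#}(z_j+\rho_j\zeta)/f_j^{\#}(z_j)$, and the maximality of $M_j$ yields $g_j^{\#}(\zeta)\le (r-|z_j|)/(r-|z_j+\rho_j\zeta|)=1/(1-R/M_j)\to 1$ uniformly on each disk $|\zeta|\le R$, while $g_j^{\#}(0)=1$. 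By Marty's criterion $\{g_j\}$ is then normal on $\mC$, so a subsequence converges locally uniformly (spherically) to a meromorphic $g$ with $g^{\#}(0)=1$ — whence $g$ is non-constant — and $g^{\#}\le 1$ everywhere.

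To obtain the weighted statement with the factor $\rho_j^{-\alpha}$, I would replace $f_j^{\#}$ in the maximal function by an $\alpha$-adapted extremal quantity, chosen so that the scale $\rho_j$ produced by the maximization makes $g_j(\zeta)=\rho_j^{-\alpha}f_j(z_j+\rho_j\zeta)$ satisfy both the normalization $g_j^{\#}(0)=1$ and the bound $g_j^{\#}(\zeta)\le 1+o(1)$ on compacta; the ratio of boundary weights again tends to $1$ as $\rho_j\to 0$, which transfers the uniform bound to the limit, and Marty's criterion once more delivers a subsequential limit $g$ with $g^{\#}\le g^{\#}(0)=1$. The multiplicity hypotheses enter precisely here: since every zero of $f_j$ has multiplicity $\ell\ge m>\alpha$, a zero at distance $O(\rho_j)$ from $z_j$ contributes a factor $\rho_j^{\ell-\alpha}$ with $\ell-\alpha>0$, so rescaled zeros remain zeros of the limit, while dually, poles of multiplicity $\ge p>-\alpha$ give $\rho_j^{-\alpha-\ell}\to\infty$, so rescaled poles remain poles. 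This is what guarantees that $g$ is a genuine non-degenerate meromorphic function on $\mC$ rather than collapsing to the constant $0$ or $\infty$.

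The main obstacle I anticipate is the weighted construction itself: designing the $\alpha$-weighted extremal functional so that (i) its maximum tends to $\infty$ and is attained in the interior, (ii) the induced scale satisfies $\rho_j\to 0$, and (iii) the normalized bound $g_j^{\#}\le 1+o(1)$ genuinely passes to the limit. Each of these requires the two-sided restriction $-p<\alpha<m$, and the bookkeeping near zeros and poles — verifying that the rescaled limit is non-constant and meromorphic rather than identically $0$ or $\infty$ — is the delicate point. By contrast, the reduction to $\overline{D_r(0)}$, the appeals to Marty's criterion, and the passage to a locally uniformly convergent subsequence are routine.
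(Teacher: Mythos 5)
First, a point of reference: the paper offers no proof to compare against. Lemma~\ref{lem:zp} is imported from Chen and Gu \cite{chen-gu} (cf.\ \cite[Lemma 2]{pang-zalcman}) and is stated without proof, so your attempt has to be measured against the known arguments in those papers. Your treatment of the model case $\alpha=0$ is correct and complete: it is exactly Zalcman's original rescaling, and the chain $g_j^{\#}(\zeta)=\rho_j f_j^{\#}(z_j+\rho_j\zeta)\le (r-|z_j|)/(r-|z_j+\rho_j\zeta|)\le (1-R/M_j)^{-1}$ on $|\zeta|\le R$ is the standard estimate (with ``$\le$'', not ``$=$'', in your display). But for $\alpha\ne 0$ your proposal never actually produces the ``$\alpha$-adapted extremal quantity''; it asserts that one can be designed and then lists, under ``anticipated obstacles,'' precisely the three verifications that constitute the proof. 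That is the gap, and it is not a routine one. The natural candidate is Pang's functional
\[
F_j(z)=\frac{(r-|z|)^{1+\alpha}\,|f_j'(z)|}{(r-|z|)^{2\alpha}+|f_j(z)|^2},
\qquad\text{matching}\qquad
g_j^{\#}(\zeta)=\frac{\rho_j^{1+\alpha}\,|f_j'(z_j+\rho_j\zeta)|}{\rho_j^{2\alpha}+|f_j(z_j+\rho_j\zeta)|^2},
\]
and with $z_j$ a maximizer of $F_j$, $M_j=F_j(z_j)$ and $\rho_j$ normalized by $\rho_j^{1+\alpha}M_j=(r-|z_j|)^{1+\alpha}$, maximality transfers only partially:
\[
g_j^{\#}(\zeta)\le\left(\frac{r-|z_j|}{r-|z_j+\rho_j\zeta|}\right)^{1+\alpha}\cdot
\frac{(r-|z_j+\rho_j\zeta|)^{2\alpha}+|f_j(z_j+\rho_j\zeta)|^2}{\rho_j^{2\alpha}+|f_j(z_j+\rho_j\zeta)|^2}.
\]
The first factor tends to $1$, as you predicted; but for $\alpha>0$ the second factor is of size $\bigl((r-|z|)/\rho_j\bigr)^{2\alpha}\to\infty$ wherever $|f_j|$ is small, i.e.\ precisely when $f_j$ has a zero within $O(\rho_j)$ of $z_j$, so the bound $g_j^{\#}\le 1+o(1)$ simply fails to follow from maximality there. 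Symmetrically, for $\alpha<0$ the normalization $g_j^{\#}(0)=1$ (your only source of non-constancy of $g$) can degenerate, and poles must be handled; indeed for $\alpha\le -1$ (possible once $p\ge 2$) even the normalization $\rho_j^{1+\alpha}M_j=(r-|z_j|)^{1+\alpha}$ breaks down, which is why extending Pang's range $-1<\alpha<m$ to $-p<\alpha$ required genuinely new work by Chen and Gu.

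This also means your account of where the hypotheses enter is misplaced. The conditions ``zeros of multiplicity $\ge m>\alpha$'' and ``poles of multiplicity $\ge p>-\alpha$'' are not used, as you suggest, to check softly that rescaled zeros and poles survive into the limit; in the actual proofs they are what rescues the estimate in exactly the two regimes above, via a weighted (improved) Marty-type criterion and a case analysis in neighborhoods of zeros and poles of $f_j$. Your factor-counting heuristic ($c_j\rho_j^{\ell-\alpha}(\zeta-\zeta_j)^{\ell}$, etc.) tacitly assumes the coefficients $c_j$ stay bounded, which is not automatic and is part of what the case analysis establishes; and non-constancy of $g$ comes from arranging $g^{\#}(0)=1$, which is itself part of the delicate choice of $z_j$ and $\rho_j$, not a consequence of zero/pole bookkeeping. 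Since the lemma is false at the endpoints $\alpha=m$ and $\alpha=-p$, any completed construction must invoke the strict inequalities quantitatively; a sketch that never does so cannot close. In short: the plan is sound in outline and the classical $\alpha=0$ case is done correctly, but the weighted extremal construction and the zero/pole analysis --- the entire content of the Zalcman--Pang extension beyond Zalcman's lemma --- are missing, and acknowledged as missing.
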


We remark that if $f(z)\neq 0$ in $D$ for every $f\in\cF,$ then $\alpha\in (-p,~ +\infty).$ Likewise, if each $f\in\cF$ does not have any pole in $D,$ then $\alpha\in (-\infty,~m)$ and if $f(z)\neq 0,~\infty$ in $D,$ for every $f\in\cF,$ then $\alpha\in (-\infty,~+\infty).$ 
    
\begin{lem}\cite[Lemma 3]{chen-xu}\label{lem:xu}
Let $\cF\subset\cM(\mD)$ and suppose that $h\in\cH(\mD)$ or $h\equiv\infty.$ Further, assume that, for each $f\in\cF,$ $f(z)\neq h(z)$ in $\mD.$ If $\cF$ is normal in $\mD\setminus\left\{0\right\}$ but not normal in $\mD,$ then there exists a sequence $\left\{f_j\right\}\subset\cF$ such  that $f_j\rightarrow h$ in $\mD\setminus\left\{0\right\}.$
\end{lem}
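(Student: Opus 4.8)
The plan is to argue by contradiction against non-normality at the origin, exploiting the fact that the hypothesis $f\neq h$ lets us replace each $f\in\cF$ by the \emph{holomorphic} function $1/(f-h)$ and then invoke the maximum modulus principle to ``fill in'' the puncture at $0$. First I would treat the generic case $h\in\cH(\mD)$. Since $\cF$ is not normal at $0$, choose a sequence $\{f_j\}\subset\cF$ that admits no subsequence converging spherically and locally uniformly on any neighbourhood of $0$. Because $\cF$ is normal on the punctured disk $\mD\setminus\{0\}$, pass to a subsequence (retaining the notation $\{f_j\}$) with $f_j\to g$ spherically locally uniformly on $\mD\setminus\{0\}$, where $g$ is meromorphic on $\mD\setminus\{0\}$ or $g\equiv\infty$. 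The goal is to prove $g\equiv h$, since then $f_j\to h$ on $\mD\setminus\{0\}$ is exactly the assertion.

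Next I would set $F_j:=1/(f_j-h)$. By hypothesis $f_j-h$ is meromorphic and zero-free, so $F_j$ is holomorphic on all of $\mD$, its zeros being precisely the poles of $f_j$. Assume, for contradiction, that $g\not\equiv h$. The generalized Hurwitz theorem, applied to the zero-free sequence $f_j-h\to g-h$, forces $g-h$ to be zero-free on $\mD\setminus\{0\}$ (or $g\equiv\infty$); consequently $G:=1/(g-h)$ is holomorphic on $\mD\setminus\{0\}$ (with $G\equiv 0$ when $g\equiv\infty$), and $F_j\to G$ locally uniformly in the Euclidean sense on $\mD\setminus\{0\}$, the only points needing care being the poles of $g$, where $F_j\to 0$. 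Now comes the crucial step: on a small circle $|z|=r$ the holomorphic functions $F_j$ are uniformly bounded (being uniformly convergent there), so by the maximum modulus principle they are uniformly bounded on $\overline{D_r(0)}$. Montel's theorem then yields a further subsequence with $F_j\to\tilde F$ locally uniformly on $D_r(0)$, where $\tilde F$ is holomorphic and agrees with $G$ on the punctured disk. Writing $f_j=h+1/F_j$, the uniform convergence $F_j\to\tilde F$ gives $f_j\to h+1/\tilde F$ spherically and locally uniformly on $D_r(0)$ (interpreted as $f_j\to\infty$ when $\tilde F\equiv 0$). Thus a further subsequence of $\{f_j\}$ converges spherically near $0$, contradicting the choice of the sequence; hence $g\equiv h$.

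Finally I would dispose of the case $h\equiv\infty$ directly: here $f\neq\infty$ means $\cF\subset\cH(\mD)$, and the limit $g$ of a non-normal-at-$0$ subsequence is holomorphic or $\equiv\infty$. If $g$ were finite, the same maximum-modulus-plus-Montel argument applied to the $f_j$ themselves would make $\{f_j\}$ normal at $0$, a contradiction; so $g\equiv\infty=h$, as required.

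\textbf{The main obstacle} I anticipate is the bookkeeping around spherical versus Euclidean convergence: one must verify that $F_j\to G$ genuinely holds in the Euclidean sense, uniformly on circles about $0$, so that the maximum principle and Montel's theorem actually apply. This is where the Hurwitz step is indispensable, since it excludes poles of $G$ in the punctured disk (which would destroy the uniform boundedness on $|z|=r$), and one must also handle carefully the points where $g$ has poles, that is, where $G$ vanishes and $F_j\to 0$.
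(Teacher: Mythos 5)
The paper offers no proof of this lemma to compare against: it is quoted verbatim from Chen and Xu \cite[Lemma 3]{chen-xu}. Your blind argument is correct and complete, and it follows the classical route for such puncture lemmas: replace $f_j$ by the holomorphic functions $1/(f_j-h)$ (legitimate precisely because $f_j\neq h$), use Hurwitz to rule out zeros of $g-h$ on the punctured disk, transfer to Euclidean convergence on a circle $|z|=r$, and then apply the maximum modulus principle plus Montel to manufacture a spherically convergent subsequence near $0$, contradicting the choice of the witness sequence; the case $h\equiv\infty$ reduces to the same argument for the $f_j$ themselves. The only step you use tacitly is that a sequence witnessing non-normality in $\mD$, once a subsequence converging on $\mD\setminus\{0\}$ is extracted, can have no subsequence converging on any neighbourhood of $0$ --- this needs the one-line gluing observation that locally uniform convergence on $D_r(0)$ and on $\mD\setminus\{0\}$, with limits agreeing on the overlap, yields locally uniform convergence on all of $\mD$; with that remark added, your proof stands as a self-contained substitute for the cited result.
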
		
		
\begin{prop}\label{prop:1}
Let $f\in\mathcal{M}(\mC)$ be a transcendental function and let $Q[f]$ be a differential polynomial of $f$ as defined in \eqref{eqn:3} having degree $\lambda_Q$ and weight $\mu_Q.$ Assume that $\psi~(\not\equiv 0,~\infty)$ is a small function of $f.$ Then $$\lambda_Q T(r,f)\leq\overline{N}(r,f) + (1+\mu_Q-\lambda_Q)\overline{N}\left(r,\frac{1}{f}\right) + \overline{N}\left(r,\frac{1}{Q[f]-\psi}\right)+ S(r,f).$$ 
\end{prop}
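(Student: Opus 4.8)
The plan is to exploit the factorization $Q[f]=f^{\lambda_Q}H$, where I set $H:=\prod_{i=1}^{k}(f^{x_i})^{(y_i)}/f^{x_i}$, and to feed it into the Second Main Theorem for $Q[f]$ against the three small targets $0,\psi,\infty$. First I would record the two basic facts about $H$. By the lemma on the logarithmic derivative applied to each $f^{x_i}$ (using $T(r,f^{x_i})=x_iT(r,f)$, so $S(r,f^{x_i})=S(r,f)$) one gets $m(r,H)=S(r,f)$. A direct local computation at the zeros and poles of $f$ --- invoking the standing hypotheses $x_i\ge y_i$ and $x_0>0$, so that $x_iq\ge y_i$ at every zero of order $q$ --- shows that the only poles of $H$ lie over the zeros and poles of $f$, each of exact order $y'=\sum y_i$; hence $N(r,H)=y'\,\overline{N}(r,1/f)+y'\,\overline{N}(r,f)$. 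The same local analysis gives the pole divisor of $Q[f]$: at a pole of $f$ of order $p$ it has order $\lambda_Qp+y'$, whence $N(r,Q[f])=\lambda_QN(r,f)+y'\,\overline{N}(r,f)$ and $\overline{N}(r,Q[f])=\overline{N}(r,f)$.

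Next I would establish a sharpened characteristic inequality. Writing $\lambda_QT(r,f)=m(r,f^{\lambda_Q})+\lambda_QN(r,f)$ and $f^{\lambda_Q}=Q[f]/H$, I bound $m(r,f^{\lambda_Q})\le m(r,Q[f])+m(r,1/H)$, and then use the First Main Theorem together with $m(r,H)=S(r,f)$ in the form $m(r,1/H)=N(r,H)-N(r,1/H)+S(r,f)$. Crucially I would \emph{keep} the negative term $-N(r,1/H)$ rather than discard it. Substituting the formulas for $N(r,H)$ and $N(r,Q[f])$ and collecting terms yields $\lambda_QT(r,f)\le T(r,Q[f])+y'\,\overline{N}(r,1/f)-N(r,1/H)+S(r,f)$.

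Finally I would apply the Second Main Theorem to $Q[f]$ (with $S(r,Q[f])=S(r,f)$, since $Q[f]$ is a differential polynomial, so $T(r,Q[f])=O(T(r,f))$, and $\psi$ is small), obtaining $T(r,Q[f])\le \overline{N}(r,Q[f])+\overline{N}(r,1/Q[f])+\overline{N}(r,1/(Q[f]-\psi))+S(r,f)$. The decisive point is the bookkeeping of $\overline{N}(r,1/Q[f])$: the zeros of $Q[f]$ split disjointly into the zeros of $f$ (each a genuine zero of $Q[f]$, of order $\lambda_Qq-y'\ge x_0>0$) and the ``spurious'' zeros arising from the derivative factors, which are precisely the zeros of $H$ and lie away from the zeros and poles of $f$. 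Thus $\overline{N}(r,1/Q[f])=\overline{N}(r,1/f)+\overline{N}(r,1/H)$. Inserting this into the sharpened inequality and pairing the two appearances of the spurious zeros through $\overline{N}(r,1/H)-N(r,1/H)\le 0$ makes them cancel, leaving exactly $\lambda_QT(r,f)\le\overline{N}(r,f)+(1+y')\overline{N}(r,1/f)+\overline{N}(r,1/(Q[f]-\psi))+S(r,f)$, which is the claim since $1+\mu_Q-\lambda_Q=1+y'$.

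I expect the main obstacle to be precisely this handling of the extra zeros of $Q[f]$. The derivative factors $(f^{x_i})^{(y_i)}$ genuinely vanish away from the zeros of $f$ (already for $ff'$ with $f=\sin z$), so a naive use of the Second Main Theorem produces an uncontrolled surplus $\overline{N}(r,1/Q[f])-\overline{N}(r,1/f)$. The resolution is not to estimate these zeros but to arrange that they enter the computation twice with opposite signs --- once via $-N(r,1/H)$ in the characteristic inequality and once via $\overline{N}(r,1/Q[f])$ in the Second Main Theorem --- so that $\overline{N}(r,1/H)\le N(r,1/H)$ absorbs them. The remaining work is routine: verifying the exact pole orders of $H$ and $Q[f]$ from $x_i\ge y_i$, and confirming $m(r,H)=S(r,f)$ and $S(r,Q[f])=S(r,f)$.
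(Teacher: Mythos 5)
Your proof is correct and follows essentially the same route as the paper: both reduce $\lambda_Q T(r,f)$ to $T(r,Q[f])$ via the lemma on the logarithmic derivative applied to $Q[f]/f^{\lambda_Q}$ and then apply the second main theorem to $Q[f]$ with the three small targets $0,\ \psi,\ \infty$. The only difference is in the bookkeeping of the zeros of $Q[f]$: the paper absorbs the surplus through the multiplicity estimate $N(r,1/Q[f])-\overline{N}(r,1/Q[f])\ge\left[(q+1)\lambda_Q-\mu_Q-1\right]\overline{N}(r,1/f)$ at zeros of $f$ of multiplicity $q$, whereas you route the same count through the auxiliary function $H$ and the cancellation $\overline{N}(r,1/H)\le N(r,1/H)$ --- an equivalent computation (note only the upper bounds $N(r,H)\le y'\bigl(\overline{N}(r,1/f)+\overline{N}(r,f)\bigr)$ and $\overline{N}(r,1/Q[f])\le\overline{N}(r,1/f)+\overline{N}(r,1/H)$ are actually needed, since the claimed exactness can fail at degenerate zeros of $f$).
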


\begin{proof}
By definition of $Q[f],$ it is apparent that $Q[f]\not\equiv 0.$ Then from the first fundamental theorem of Nevanlinna, we have
\begin{align}\label{eq:m2}
	\lambda_QT(r,f) &= \lambda_Q~m\left(r,\frac{1}{f}\right)+\lambda_QN\left(r,\frac{1}{f}\right)+O(1)\nonumber\\
	&\leq m\left(r,\frac{Q[f]}{f^{\lambda_Q}}\right)+ m\left(r,\frac{1}{Q[f]}\right) +\lambda_QN\left(r,\frac{1}{f}\right)+O(1).
	\end{align}
From the Nevanlinna's theorem on logarithmic derivative, we find that $$m\left(r,\frac{Q[f]}{f^{\lambda_Q}}\right)=S(r,f).$$
Thus from \eqref{eq:m2}, we obtain
\begin{align*}
	\lambda_QT(r,f) &\leq m\left(r,\frac{1}{Q[f]}\right) +\lambda_QN\left(r,\frac{1}{f}\right)+ S(r,f)\\
	&= T(r,Q[f])- N\left(r,\frac{1}{Q[f]}\right)+ \lambda_QN\left(r,\frac{1}{f}\right)+S(r,f).
\end{align*}
Applying the second fundamental theorem of Nevanlinna for small functions to $T(r,Q[f]),$ we get
\begin{align}\label{eq:l3_2}
	\lambda_QT(r,f) &\leq \lambda_QN\left(r,\frac{1}{f}\right)+\overline{N}(r,Q[f])+\overline{N}\left(r,\frac{1}{Q[f]}\right)+\overline{N}\left(r,\frac{1}{Q[f]-\psi}\right)\nonumber\\ &\qquad-N\left(r,\frac{1}{Q[f]}\right)+ S(r,f)\nonumber\\
	&= \lambda_QN\left(r,\frac{1}{f}\right)+\overline{N}(r,f)+ \overline{N}\left(r,\frac{1}{Q[f]}\right)+\overline{N}\left(r,\frac{1}{Q[f]-\psi}\right)\\ \nonumber &\qquad-N\left(r,\frac{1}{Q[f]}\right)+ S(r,f).
\end{align} 

Since a zero of $f$ with multiplicity $m$ is also a zero of $Q[f]$ with multiplicity at least $(m+1)\lambda_Q-\mu_Q,$ $$N\left(r,\frac{1}{Q[f]}\right)- \overline{N}\left(r,\frac{1}{Q[f]}\right)\geq\left[(m+1)\lambda_Q-\mu_Q-1\right]\overline{N}\left(r,\frac{1}{f}\right).$$
Therefore, from \eqref{eq:l3_2}, we obtain
\begin{align*}
	\lambda_QT(r,f) &\leq \lambda_QN\left(r,\frac{1}{f}\right)+\overline{N}(r,f)+\left[1+\mu_Q-(m+1)\lambda_Q\right]\overline{N}\left(r,\frac{1}{f}\right)\\ &\qquad + \overline{N}\left(r,\frac{1}{Q[f]-\psi}\right)+ S(r,f)\\
	&\leq  \overline{N}(r,f) + (1+\mu_Q-\lambda_Q)\overline{N}\left(r,\frac{1}{f}\right) + \overline{N}\left(r,\frac{1}{Q[f]-\psi}\right)+ S(r,f).
	\end{align*}
\end{proof}		
		
	\begin{cor}\label{cor:1}
		Let $f\in\mathcal{M}(\mC)$ be a transcendental function and let $Q[f]$ be a differential polynomial of $f$ as defined in \eqref{eqn:3}. Assume that $\psi~(\not\equiv 0,~\infty)$ is a small function of $f.$ If $f\neq 0,$ then $Q[f]-\psi$ has infinitely many zeros in $\mC.$
	\end{cor}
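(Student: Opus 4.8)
The plan is to argue by contradiction using Proposition \ref{prop:1}. Assume that $Q[f]-\psi$ has only finitely many zeros in $\mC$. Then the counting function of its distinct zeros grows at most logarithmically, so that $\overline{N}\left(r,1/(Q[f]-\psi)\right)=O(\log r)$. Since $f$ is transcendental, $\log r = S(r,f)$, and hence this term is absorbed into the error term $S(r,f)$. This is the step where I would want to be careful: the passage from ``finitely many zeros'' to ``$S(r,f)$'' relies precisely on transcendence of $f$, and it is the only place transcendence is used in a nontrivial way.

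Next I would feed the hypothesis $f\neq 0$ into the inequality. Because $f$ omits the value $0$, we have $\overline{N}\left(r,1/f\right)=0$, which kills the middle term on the right-hand side of Proposition \ref{prop:1} (so the sign of the coefficient $1+\mu_Q-\lambda_Q$ is irrelevant here, though one checks $\mu_Q=\lambda_Q+y'$ makes it positive anyway). The estimate of Proposition \ref{prop:1} then collapses to
\begin{equation*}
\lambda_Q\,T(r,f)\leq \overline{N}(r,f)+S(r,f).
\end{equation*}
Using the elementary bound $\overline{N}(r,f)\leq N(r,f)\leq T(r,f)+O(1)$, this becomes $\lambda_Q\,T(r,f)\leq T(r,f)+S(r,f)$, i.e.
\begin{equation*}
(\lambda_Q-1)\,T(r,f)\leq S(r,f).
\end{equation*}

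The final step is to observe that $\lambda_Q\geq 2$. Indeed, in the setup following \eqref{eqn:3} it is assumed that $x_0>0$ and $y'=\sum_{i=1}^{k}y_i>0$; the latter forces some $y_i>0$, whence $x_i\geq y_i>0$, so $x'=\sum_{i=1}^{k}x_i>0$ and therefore $\lambda_Q=x_0+x'\geq 1+1=2$. Consequently $\lambda_Q-1\geq 1$, and the displayed inequality yields $T(r,f)\leq S(r,f)$, which is impossible for a transcendental meromorphic function (where $S(r,f)=o(T(r,f))$ as $r\to\infty$ outside a set of finite measure). This contradiction shows that $Q[f]-\psi$ must have infinitely many zeros in $\mC$. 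I expect the only genuinely delicate points to be the justification that $O(\log r)=S(r,f)$ and the verification that the standing assumptions on the exponents guarantee $\lambda_Q\geq 2$; the rest is a direct substitution into Proposition \ref{prop:1}.
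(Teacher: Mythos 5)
Your proposal is correct and follows essentially the same route as the paper: substitute $\overline{N}(r,1/f)=0$ into Proposition \ref{prop:1}, absorb $\overline{N}(r,f)$ into $T(r,f)$ to get $(\lambda_Q-1)T(r,f)\leq \overline{N}\left(r,\tfrac{1}{Q[f]-\psi}\right)+S(r,f)$, and use $\lambda_Q>1$. You have merely made explicit two steps the paper leaves implicit, namely that finitely many zeros would force $\overline{N}\left(r,\tfrac{1}{Q[f]-\psi}\right)=O(\log r)=S(r,f)$ for transcendental $f$, and that the standing assumptions $x_0>0$, $y'>0$ give $\lambda_Q\geq 2$.
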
	
		
		\begin{proof}
		From Proposition \ref{prop:1}, we have
		\begin{align}\label{eq:cor1}
		\lambda_Q T(r,f)& \leq\overline{N}(r,f) + (1+\mu_Q-\lambda_Q)\overline{N}\left(r,\frac{1}{f}\right) + \overline{N}\left(r,\frac{1}{Q[f]-\psi}\right)\nonumber\\ &\qquad + S(r,f).
		\end{align}
		Since $f\neq 0,$ $\overline{N}(r,1/f)=0.$ Thus from \eqref{eq:cor1}, we obtain
		\begin{align*}
		\lambda_Q T(r,f)& \leq\overline{N}(r,f) + \overline{N}\left(r,\frac{1}{Q[f]-\psi}\right)+ S(r,f).
		\end{align*}
		This implies that $$(\lambda_Q-1)T(r,F)\leq\overline{N}\left(r,\frac{1}{Q[F]-\psi}\right)+ S(r,F).$$ Since $\lambda_Q-1>0,$ it follows that $Q[F]-\psi$ has infinitely many zeros in $\mC.$
		\end{proof}
		
In \cite{chang}, Chang proved that if $f$ is a non-constant rational function such that $f\neq 0,$ then for $k\geq 1,$ $f^{(k)}-1$ has at least $k+1$ distinct zeros in $\mC.$ Using the method of Chang \cite{chang}, Deng et al. \cite{deng} proved that the constant $1$ can be replaced by a polynomial $p~(\not\equiv 0).$ Recently, Xie and Deng \cite{xie} sharpened the lower bound for the distinct zeros of $f^{(k)}-p$ in $\mC$ by involving the multiplicity of poles of $f.$ Thin and Oanh \cite{thin-3} extended the result of Chang to differential polynomials by proving that if $f~(\neq 0)$ is a non-constant rational function, then $Q[f]-1$ has at least $\mu_Q$ distinct zeros in $\mC.$ We obtain a slightly better result in the following form:
 
\begin{prop}\label{prop:2}
	Let $f$ be a non-constant rational function, having poles of multiplicity at least $m,$ $m\in\mN,$ and let $p~(\not\equiv 0)$ be a polynomial. Let $Q[f]$ be a differential polynomial of $f$ as defined in \eqref{eqn:3} having degree $\lambda_Q$ and weight $\mu_Q.$ Assume that $f\neq 0.$ Then $Q[f]-p$ has at least $\mu_Q+\lambda_Q(m-1)$ distinct zeros in $\mathbb{C}.$  
\end{prop}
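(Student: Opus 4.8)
The target bound can be rewritten more transparently as
$$\mu_Q+\lambda_Q(m-1)=\lambda_Q m+y',$$
since $\mu_Q=\lambda_Q+y'$, and this is exactly the minimal possible pole order of $Q[f]$ at a pole of $f$ — which is the clue to the whole argument. My plan is to use the hypothesis $f\neq 0$ to write down the exact form of $f$ and of $Q[f]$, and then to read off the bound from the polynomial Mason--Stothers ($abc$) theorem. Since $f$ is a non-constant rational function with no zeros in $\mC$, its numerator must be a nonzero constant, so I would write
$$f(z)=\frac{c}{\prod_{i=1}^{s}(z-b_i)^{m_i}},\qquad c\neq 0,\ \ b_i\ \text{distinct},\ \ m_i\ge m,$$
and put $M=\sum_{i=1}^{s}m_i$, so that $M\ge sm$. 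A local computation at each $b_i$ (each differentiation raises a pole order by exactly one) shows $Q[f]$ has a pole of order precisely $m_i\lambda_Q+y'$ there, while comparing orders at $\infty$ shows $Q[f]$ vanishes at $\infty$ to order $\lambda_Q M+y'$. Hence, in lowest terms,
$$Q[f]=\frac{A}{D},\qquad D=\prod_{i=1}^{s}(z-b_i)^{\,m_i\lambda_Q+y'},\quad \gcd(A,D)=1,\quad \deg D=\lambda_Q M+sy',\quad \deg A\le (s-1)y'.$$

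Next I would set $G:=Q[f]-p=\dfrac{A-pD}{D}=:\dfrac{B}{D}$ with $B=A-pD$. Since $D(b_i)=0$ gives $B(b_i)=A(b_i)\neq 0$, we have $\gcd(B,D)=1$, so the distinct zeros of $G$ in $\mC$ are exactly the distinct roots of $B$; let $n_0$ be their number, the quantity to be bounded below. Writing $d=\deg p\ge 0$, the inequality $\deg A\le (s-1)y'<\deg(pD)=\lambda_Q M+sy'+d$ forces
$$\deg B=\lambda_Q M+sy'+d=:Z .$$

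The decisive step is to apply Mason--Stothers to the identity $A-pD-B=0$. In the generic case $\gcd(A,p)=1$, the factorization $\gcd(A,D)=1$ makes the three polynomials $A,\,pD,\,B$ pairwise coprime and not all constant, so Mason--Stothers gives $Z\le N_0-1$, where $N_0$ is the number of distinct roots of the product $A\cdot pD\cdot B$. Estimating
$$N_0\le \overline N(A)+\overline N(p)+\overline N(D)+\overline N(B)\le \deg A+d+s+n_0\le (s-1)y'+d+s+n_0$$
and substituting $Z=\lambda_Q M+sy'+d$ yields, after simplification,
$$n_0\ \ge\ \lambda_Q M+y'-(s-1).$$
A nontrivial common factor $g=\gcd(A,pD,B)$ (which necessarily divides $\gcd(A,p)$) only improves this estimate, since $\deg g$ enters the final inequality with a favourable sign and may be discarded. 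Finally, from $M\ge sm$ and $\lambda_Q m\ge 1$ one gets $\lambda_Q(M-m)\ge \lambda_Q m(s-1)\ge s-1$, i.e. $\lambda_Q M-(s-1)\ge \lambda_Q m$, whence $n_0\ge \lambda_Q m+y'=\mu_Q+\lambda_Q(m-1)$, as required.

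I expect the main obstacle to be the two structural claims of the first paragraph — that the pole order of $Q[f]$ at $b_i$ is \emph{exactly} $m_i\lambda_Q+y'$ and that $\deg A\le (s-1)y'$ — since these require tracking the generalized Leibniz expansion of each factor $(f^{x_j})^{(y_j)}$ both at the finite poles and at $\infty$. It is precisely the non-vanishing hypothesis $f\neq 0$ that keeps $\deg A$ small, and this smallness is exactly what makes the Mason--Stothers estimate sharp enough to reach the claimed bound; the handling of the common factor $g$ in the $abc$-identity is a secondary technical point that, as noted, can only help.
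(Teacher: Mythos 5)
Your argument is correct, but it reaches the bound by a genuinely different route than the paper. Both proofs rest on the same two structural computations: that $Q[f]$ has a pole of order exactly $m_i\lambda_Q+y'$ at each pole $b_i$ of $f$, and that the reduced numerator $A$ has degree exactly $(s-1)y'$ (these are precisely the paper's denominator exponents $\lambda_Q(n_i+m-1)+\mu_Q-\lambda_Q$ and its polynomial $h_Q$ of degree $(n-1)(\mu_Q-\lambda_Q)$, since $\mu_Q-\lambda_Q=y'$). From there the paper does not use Mason--Stothers: it rewrites the identity $A-pD=B$ after the inversion $z\mapsto 1/r$, takes logarithmic derivatives near $r=0$, compares Taylor coefficients up to order $\mu_Q+\lambda_Q(N+n(m-1)-1)+L-2$, and deduces from a Vandermonde-type rank condition that the total number of distinct roots involved must exceed the order of vanishing; this forces a four-case analysis according to whether the roots of $p$ coincide with poles of $f$ and/or with zeros of $Q[f]-p$. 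Your application of the polynomial $abc$ theorem to $A+(-pD)+(-B)=0$ collapses all of that case bookkeeping into the single radical estimate $N_0(A\cdot pD\cdot B)\le \deg A+\deg p+s+n_0$, because coincidences among roots only shrink the radical and hence only strengthen the conclusion; the trade-off is that you invoke a named theorem where the paper is self-contained (though the paper's coefficient comparison is, in substance, a hands-on proof of exactly the instance of Mason--Stothers you need). Two points worth making explicit if you write this up: (i) the \emph{exactness} of the pole orders and of $\deg A=(s-1)y'$ needs the observation that leading coefficients multiply without cancellation, and at $\infty$ that $(f^{x_j})^{(y_j)}\sim C_jz^{-x_jM-y_j}$ with $C_j\neq 0$, the degenerate factors being excluded by $x_j\ge y_j$; and (ii) the common factor $g=\gcd(A,p)$ does not strictly improve the estimate but cancels exactly --- you lose $\deg g$ on the left of Mason--Stothers and recover it on the right via $N_0(A/g)\le\deg A-\deg g$ --- which still gives $n_0\ge\lambda_QM+y'-(s-1)\ge\lambda_Qm+y'=\mu_Q+\lambda_Q(m-1)$, as required.
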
	

\begin{proof}
Since $f\neq 0,$ it follows that $f$ cannot be a polynomial and so $f$ has at least one pole. Therefore, we can write
	\begin{equation}\label{eq:l1.1}
		f(z)=\frac{C_1}{\prod\limits_{i=1}^{n}(z+\alpha_i)^{n_i+m-1}}.
	\end{equation}
Let
	\begin{equation}\label{eq:l1.2}
		p(z)=C_2\prod\limits_{i=1}^{l}(z+ \beta_i)^{l_i},
	\end{equation}
	where $C_1,~C_2$ are non-zero constants; $l,~n,~n_i$ are positive integers and $l_i$ are non-negative integers. Also, $\beta_i$ (when $1\leq i\leq l$) are distinct complex numbers and $\alpha_i$ (when $1\leq i\leq n$) are distinct complex numbers.
	
	From \eqref{eq:l1.1}, one can deduce that 
	\begin{equation}\label{eq:l1.3}
		Q[f](z)=\frac{h_Q(z)}{\prod\limits_{i=1}^{n}(z+\alpha_i)^{\lambda_Q(n_i+m-1)+\mu_Q-\lambda_Q}},
	\end{equation}
	where  $h_Q$ is a polynomial of degree $(n-1)(\mu_Q-\lambda_Q).$
	
	Also, it is easy to see that $Q[f]-p$ has at least one zero in $\mathbb{C}.$ Therefore, we can set
	\begin{equation}\label{eq:l1.4}
		Q[f](z)=p(z)+\frac{C_3\prod\limits_{i=1}^{q}(z+\gamma_i)^{q_i}}{\prod_{i=1}^{n}(z+\alpha_i)^{\lambda_Q(n_i+m-1)+\mu_Q-\lambda_Q}},
	\end{equation}
	where $C_3\in\mathbb{C}\setminus\left\{0\right\},$ $q_i$ are positive integers and $\gamma_i$ ($1\leq i\leq q$) are distinct complex numbers.
	
	Let $L=\sum\limits_{i=1}^{l}l_i$ and $N=\sum\limits_{i=1}^{n}n_i.$ Then from \eqref{eq:l1.2}, \eqref{eq:l1.3} and \eqref{eq:l1.4}, we have
	\begin{equation}\label{eq:l1.5}
		C_2\prod\limits_{i=1}^{l}(z+\beta_i)^{l_i}\prod_{i=1}^{n}(z+\alpha_i)^{\lambda_Q(n_i+m-1)+\mu_Q-\lambda_Q} + C_3\prod\limits_{i=1}^{q}(z+\gamma_i)^{q_i}=h_Q(z)
	\end{equation}
	From \eqref{eq:l1.5}, we find that 
	\begin{align*}
	\sum\limits_{i=1}^{q}q_i&=\sum\limits_{i=1}^{n}\left[\lambda_Q(n_i+m-1)+\mu_Q-\lambda_Q\right] + \sum\limits_{i=1}^{l}l_i\\
	&=\lambda_QN + n(m-1)\lambda_Q+n(\mu_Q-\lambda_Q)+ L
	\end{align*}
	and $C_3=-C_2.$
	
	Also, from \eqref{eq:l1.5}, we get
	\begin{align*}
	&\prod\limits_{i=1}^{l}(1+\beta_ir)^{l_i}\prod_{i=1}^{n}(1+\alpha_ir)^{\lambda_Q(n_i+m-1)+\mu_Q-\lambda_Q} - \prod\limits_{i=1}^{q}(1+\gamma_ir)^{q_i}\\
	&=r^{\mu_Q+\lambda_Q(N+ n(m-1)-1)+L}b(r),
\end{align*}	
	where $b(r):=r^{(n-1)(\mu_Q-\lambda_Q)}h_Q(1/r)/C_2$ is a polynomial of degree is at most $(n-1)(\mu_Q-\lambda_Q).$ Furthermore, it follows that
	\begin{align}\label{eq:l1.6}
		&\frac{\prod\limits_{i=1}^{l}(1+\beta_ir)^{l_i}\prod\limits_{i=1}^{n}(1+\alpha_ir)^{\lambda_Q(n_i+m-1)+\mu_Q-\lambda_Q}}{\prod\limits_{i=1}^{q}(1+\gamma_ir)^{q_i}}\nonumber\\
		&=1+\frac{r^{\mu_Q+\lambda_Q(N+n(m-1)-1)+L}b(r)}{\prod\limits_{i=1}^{q}(1+\gamma_ir)^{q_i}}\nonumber\\ 
		&=1+ O\left(r^{\mu_Q+\lambda_Q(N+n(m-1)-1)+L}\right) \mbox{ as } r\rightarrow 0.
	\end{align}
 Taking logarithmic derivatives of both sides of \eqref{eq:l1.6}, we obtain
	\begin{align}\label{eq:l1.7}
		\sum\limits_{i=1}^{l}\frac{l_i\beta_i}{1+\beta_ir}&+ \sum\limits_{i=1}^{n}\frac{\left[\lambda_Q(n_i+m-1)+\mu_Q-\lambda_Q\right]\alpha_i}{1+\alpha_ir}-\sum\limits_{i=1}^{q}\frac{q_i\gamma_i}{1+\gamma_ir}\nonumber\\
		&=O\left(r^{\mu_Q+\lambda_Q(N+ n(m-1)-1)+L-1}\right) \mbox{ as } r\rightarrow 0.
	\end{align}
	
	Let $S_1=\left\{\beta_1,\beta_2,\ldots,\beta_l\right\}\cap\left\{\alpha_1,\alpha_2,\ldots,\alpha_n\right\}$ and $S_2=\left\{\beta_1,\beta_2,\ldots,\beta_l\right\}\cap\left\{\gamma_1,\gamma_2,\ldots,\gamma_q\right\}.$ Then we consider the following cases:\\
{\bf  Case 1:} $S_1=S_2=\emptyset.$\\
Let $\alpha_{n+i}=\beta_i$ when $1\leq i\leq l$ and $$N_i=\left\{\begin{array}{cc} \lambda_Q(n_i+m-1)+\mu_Q-\lambda_Q & \mbox{if}~ 1\leq i\leq n,\\ l_{i-n} & \mbox{if}~ n+1\leq i\leq n+l.\end{array}\right.$$ Then \eqref{eq:l1.7} can be written as 
	\begin{equation}\label{eq:l1.8}
		\sum\limits_{i=1}^{n+l}\frac{N_i\alpha_i}{1+\alpha_ir}-\sum\limits_{i=1}^{q}\frac{q_i\gamma_i}{1+\gamma_ir}=O\left(r^{\mu_Q+\lambda_Q(N+ n(m-1)-1)+L-1}\right)~\mbox{as $r\rightarrow 0.$}
	\end{equation}
	Comparing the coefficients of $r^j,~j=0,1,\ldots,\mu_Q+\lambda_Q(N+ n(m-1)-1)+L-2$ in \eqref{eq:l1.8}, we find that 
	\begin{equation}\label{eq:l1.9}
		\sum\limits_{i=1}^{n+l}N_i\alpha_{i}^j-\sum\limits_{i=1}^{q}q_i\gamma_i^j=0,~\mbox{for each}~ j=1,2,\ldots, \mu_Q+\lambda_Q(N+ n(m-1)-1)+L-1.
	\end{equation}
	Now, let $\alpha_{n+l+i}=\gamma_i$ for $1\leq i\leq q.$ Then from \eqref{eq:l1.9} and the fact that $\sum\limits_{i=1}^{n+l}N_i-\sum\limits_{i=1}^{q}q_i=0,$ we deduce that the system of equations
	\begin{equation}\label{eq:l1.10}
		\sum\limits_{i=1}^{n+l+q}\alpha_i^jx_i=0,~j=0,1,\ldots, \mu_Q+\lambda_Q(N+ n(m-1)-1)+L-1,
	\end{equation}
	has a non-zero solution $$\left(x_1,\ldots,x_{n+l},x_{n+l+1},\ldots,x_{n+l+q}\right)=\left(N_1,\ldots,N_{n+l},-q_1,\ldots,-q_q\right).$$
	This is possible only when the rank of the coefficient matrix of the system \eqref{eq:l1.10} is strictly less than $n+l+q.$
	
Hence $\mu_Q+\lambda_Q(N+ n(m-1)-1)+L< n+l+q.$ Since $N=\sum\limits_{i=1}^{n}n_i\geq n$ and $L=\sum\limits_{i=1}^{l}l_i\geq l,$ it follows that $q\geq \mu_Q+\lambda_Q(m-1).$ \\
{\bf  Case 2:} $S_1\neq\emptyset$ and $S_2=\emptyset.$\\
We may assume, without loss of generality, that $S_1=\left\{\beta_1,\beta_2,\ldots,\beta_{s_1}\right\}.$ Then $\beta_i=\alpha_i$ for $1\leq i\leq s_1.$ Take $s_3=l-s_1.$\\
{\bf  Subcase 2.1:} $s_3\geq 1.$\\
Let $\alpha_{n+i}=\beta_{s_1+i}$ for $1\leq i\leq s_3$ and if $s_1<n,$ then let $$N_i=\left\{\begin{array}{ccc} \lambda_Q(n_i+m-1)+\mu_Q-\lambda_Q + l_i & \mbox{if } 1\leq i\leq s_1,\\  \lambda_Q(n_i+m-1)+\mu_Q-\lambda_Q & \mbox{if } s_1+1\leq i\leq n,\\  l_{s_1-n+i} & \mbox{if } n+1\leq i\leq n+s_3.\end{array}\right.$$ If $s_1=n,$ then we take $$N_i=\left\{\begin{array}{cc} \lambda_Q(n_i+m-1)+\mu_Q-\lambda_Q +l_i & \mbox{if } 1\leq i\leq s_1,\\ l_{s_1-n+i} & \mbox{if } n+1\leq i\leq n+s_3.\end{array}\right.$$
{\bf  Subcase 2.2:} $s_3=0.$\\
 If $s_1<n,$ then set $$N_i=\left\{\begin{array}{cc} \lambda_Q(n_i+m-1)+\mu_Q-\lambda_Q+l_i & \mbox{if } 1\leq i\leq s_1,\\ \lambda_Q(n_i+m-1)+\mu_Q-\lambda_Q & \mbox{if } s_1+1\leq i\leq n\end{array}\right.$$ and if $s_1=n,$ then set $N_i=\lambda_Q(n_i+m-1)+\mu_Q-\lambda_Q+l_i,~\mbox{for } 1\leq i\leq s_1=n.$ 
	
	Thus \eqref{eq:l1.7} can be written as: $$\sum\limits_{i=1}^{n+s_3}\frac{N_i\alpha_i}{1+\alpha_ir}-\sum\limits_{i=1}^{q}\frac{q_i\gamma_i}{1+\gamma_ir}=O\left(r^{\mu_Q+\lambda_Q(N+ n(m-1)-1)+L-1}\right)~\mbox{as $r\rightarrow 0,$}$$ where $0\leq s_3\leq l-1.$ Proceeding in similar fashion as in Case 1, we deduce that $q\geq \mu_Q+m-1.$\\
{\bf  Case 3:} $S_1=\emptyset$ and $S_2\neq\emptyset.$\\
We may assume, without loss of generality, that $S_2=\left\{\beta_1,\beta_2,\ldots,\beta_{s_2}\right\}.$ Then $\beta_i=\gamma_i$ for $1\leq i\leq s_2.$ Take $s_4=l-s_2.$\\
{\bf  Subcase 3.1:} $s_4\geq 1.$\\
 Let $\gamma_{q+i}=\beta_{s_2+i}$ for $1\leq i\leq s_4$ and if $s_2<q,$ then set $$Q_i=\left\{\begin{array}{ccc} q_i-l_i & \mbox{if } 1\leq i\leq s_2,\\  q_i & \mbox{if } s_2+1\leq i\leq q,\\  -l_{s_2-q+i} & \mbox{if } q+1\leq i\leq q+s_4.\end{array}\right.$$ If $s_2=q,$ then set $$Q_i=\left\{\begin{array}{cc} q_i-l_i & \mbox{if } 1\leq i\leq s_2,\\ -l_{s_2-q+i} & \mbox{if } q+1\leq i\leq q+s_4.\end{array}\right.$$
{\bf  Subcase 3.2:} $s_4=0.$\\
 If $s_2<q,$ then set $$Q_i=\left\{\begin{array}{cc}q_i-l_i & \mbox{if } 1\leq i\leq s_2,\\ q_i & \mbox{if } s_2+1\leq i\leq q\end{array}\right.$$ and if $s_2=q,$ then set $Q_i=q_i-l_i,~\mbox{for } 1\leq i\leq s_2=q.$ 
	
	Thus \eqref{eq:l1.7} can be written as: 
	\begin{align*}
& \sum\limits_{i=1}^{n}\frac{\left[\lambda_Q(n_i+m-1)+\mu_Q-\lambda_Q\right]\alpha_i}{1+\alpha_ir}-\sum\limits_{i=1}^{q+s_4}\frac{Q_i\gamma_i}{1+\gamma_ir}\\ &=O\left(r^{\mu_Q+\lambda_Q(N+ n(m-1)-1)+L-1}\right) \mbox{ as } r\rightarrow 0,
\end{align*}
 where $0\leq s_4\leq l-1.$ Proceeding in similar way as in Case 1, we deduce that $q\geq \mu_Q+\lambda_Q(m-1).$\\
{\bf  Case 4.} $S_1\neq\emptyset$ and $S_2\neq\emptyset.$\\
We may assume, without loss of generality, that $S_1=\left\{\beta_1,\beta_2,\ldots,\beta_{s_1}\right\},~S_2=\left\{\gamma_1,\gamma_2,\ldots,\gamma_{s_2}\right\}.$ Then $\beta_i=\alpha_i$ for $1\leq i\leq s_1$ and $\gamma_i=\beta_{s_1+i}$ for $1\leq i\leq s_2.$ Take $s_5=l-s_2-s_1.$\\
{\bf  Subcase 4.1:} $s_5\geq 1.$\\
Let $\alpha_{n+i}=u_{s_1+s_2+i}$ for $1\leq i\leq s_5$ and if $s_1<n,$ then set $$N_i=\left\{\begin{array}{ccc} \lambda_Q(n_i+m-1)+\mu_Q-\lambda_Q + l_i & \mbox{if } 1\leq i\leq s_1,\\  \lambda_Q(n_i+m-1)+\mu_Q-\lambda_Q & \mbox{if } s_1+1\leq i\leq n,\\  l_{s_1+s_2-n+i} & \mbox{if } n+1\leq i\leq n+s_5.\end{array}\right.$$ If $s_1=n,$ then set $$N_i=\left\{\begin{array}{cc} \lambda_Q(n_i+m-1)+\mu_Q-\lambda_Q + l_i & \mbox{if } 1\leq i\leq s_1,\\ l_{s_1+s_2-n+i} & \mbox{if } n+1\leq i\leq n+s_5.\end{array}\right.$$ If $s_2<q,$ then set $$Q_i=\left\{\begin{array}{cc}q_i-l_{s_1+i} & \mbox{if } 1\leq i\leq s_2,\\ q_i & \mbox{if } s_2+1\leq i\leq q\end{array}\right.$$ and if $s_2=q,$ then set $Q_i=q_i-l_{s_1+i},~\mbox{for } 1\leq i\leq s_2.$\\
{\bf  Subcase 4.2:} $s_5=0.$\\
If $s_1<n,$ then set $$N_i=\left\{\begin{array}{cc} \lambda_Q(n_i+m-1)+\mu_Q-\lambda_Q + l_i & \mbox{if } 1\leq i\leq s_1,\\ \lambda_Q(n_i+m-1)+\mu_Q-\lambda_Q & \mbox{if } s_1+1\leq i\leq n.\end{array}\right.$$ If $s_1=n,$ then set $N_i=\lambda_Q(n_i+m-1)+\mu_Q-\lambda_Q + l_i~\mbox{for } 1\leq i\leq s_1.$ 
	
Also, if $s_2<q,$ then set $$Q_i=\left\{\begin{array}{cc}q_i-l_{s_1+i} & \mbox{if } 1\leq i\leq s_2,\\ q_i & \mbox{if } s_2+1\leq i\leq q\end{array}\right.$$ and if $s_2=q,$ then set $Q_i=q_i-l_{s_1+i},~\mbox{for } 1\leq i\leq s_2.$
	
Thus in both subcases, \eqref{eq:l1.7} can be written as $$\sum\limits_{i=1}^{n+s_5}\frac{N_i\alpha_i}{1+\alpha_ir}-\sum\limits_{i=1}^{q}\frac{Q_i\gamma_i}{1+\gamma_ir}=O\left(r^{\mu_Q+\lambda_Q(N+ n(m-1)-1)+L-1}\right)~\mbox{as $r\rightarrow 0,$}$$ where $0\leq s_5\leq l-2.$ Proceeding in similar fashion as in Case 1, we deduce that $q\geq\mu_Q+\lambda_Q(m-1).$ 
\end{proof}

\begin{lem}\label{lem:3}
Let $\left\{f_j\right\}\subset\mathcal{M}(\mD)$ be a sequence of non-vanishing functions, all of whose poles have multiplicities at least $m,~m\in\mN.$ Let $\left\{h_j\right\}\subset\mathcal{H}(\mD)$ be such that $h_j\rightarrow h$ locally uniformly in $\mD,$ where $h\in\cH(\mD)$ and $h(z)\neq 0$ in $\mD.$ If, for each $j,$ $Q[f_j]-h_j$ has at most $\mu_Q + \lambda_Q(m-1)-1$ zeros, ignoring multiplicities, in $\mD,$ then $\left\{f_j\right\}$ is normal in $\mD.$ 
\end{lem}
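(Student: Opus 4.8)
The plan is to argue by contradiction using the Zalcman--Pang rescaling machinery, exactly as it is set up in Lemma~\ref{lem:zp}. Suppose $\{f_j\}$ is not normal at some point $z_0\in\mD$. Since every $f_j$ is non-vanishing (zeros of multiplicity $\geq m=\infty$ in the sense that there are none) and has poles of multiplicity at least $m$, I would apply the rescaling lemma with a suitable choice of the exponent $\alpha$. The crucial point is to pick $\alpha$ so that the rescaled functions $g_j(\zeta)=\rho_j^{-\alpha}f_j(z_j+\rho_j\zeta)$ transform the differential polynomial $Q[f_j]$ correctly: because $Q$ is homogeneous of degree $\lambda_Q$ and weight $\mu_Q$, one computes $\rho_j^{-\mu_Q}Q[f_j](z_j+\rho_j\zeta)=Q[g_j](\zeta)\cdot\rho_j^{\lambda_Q\alpha-\mu_Q}$, so the natural normalization is $\alpha=\mu_Q/\lambda_Q$, which makes $\rho_j^{-\mu_Q}Q[f_j](z_j+\rho_j\zeta)=Q[g_j](\zeta)$. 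I must check that this $\alpha$ lies in the admissible range $(-p,m)$ permitted by the remark following Lemma~\ref{lem:zp}; since the $f_j$ are non-vanishing the lower constraint disappears, and the homogeneity relation $\mu_Q=\lambda_Q+y'$ with $y'>0$ forces $\alpha=1+y'/\lambda_Q>1$, so one verifies the pole-multiplicity hypothesis is compatible. This yields a non-constant $g=\lim g_j\in\cM(\mC)$ with $g\neq 0$ (the non-vanishing passes to the limit by Hurwitz) and poles of multiplicity at least $m$.

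Next I would identify the limit equation for $Q[g]$. On compact subsets avoiding the poles, $h_j(z_j+\rho_j\zeta)\to h(z_0)$ while the left side converges; since $\rho_j\to0$ and $h(z_0)\neq0$, the term $\rho_j^{-\mu_Q}h_j(z_j+\rho_j\zeta)$ blows up, which forces the correct reading: one instead examines $Q[f_j]-h_j$ directly and tracks its zeros under rescaling. The point is that a zero of $Q[f_j]-h_j$ corresponds, after the change of variables $z=z_j+\rho_j\zeta$, to a zero of $Q[g_j](\zeta)-\rho_j^{-\mu_Q}h_j(z_j+\rho_j\zeta)$; because the scaling factor is unbounded the limiting relation becomes $Q[g]\equiv 0$ unless the zeros accumulate, and the whole thrust is to show $g$ must be rational with $Q[g]$ having too few zeros. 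Concretely, I would split into the two Zalcman alternatives: either $g$ is transcendental, in which case Corollary~\ref{cor:1} (applied with the small function $\psi$ replaced by a suitable constant arising as the limit) shows $Q[g]-\psi$ has infinitely many zeros, contradicting the bound $\mu_Q+\lambda_Q(m-1)-1$ transported from the hypothesis via Hurwitz's theorem; or $g$ is rational and non-constant, in which case Proposition~\ref{prop:2} (with $p$ a nonzero constant) guarantees at least $\mu_Q+\lambda_Q(m-1)$ distinct zeros of $Q[g]-p$, again one more than the number the limit is permitted to inherit.

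The decisive bookkeeping step is the zero-counting transfer: I must show that the zeros of $Q[g]-c$ (for the appropriate constant $c\neq0$) are limits of zeros of $Q[f_j]-h_j$, so that if $Q[f_j]-h_j$ has at most $\mu_Q+\lambda_Q(m-1)-1$ distinct zeros in $\mD$, then $Q[g]-c$ has at most that many distinct zeros in $\mC$. This is a standard Hurwitz argument: around each simple limiting zero $\zeta^\ast$ of $Q[g]-c$, for large $j$ the function $Q[g_j]-c_j$ has a zero near $\zeta^\ast$, and distinct limiting zeros pull back to distinct zeros of $Q[f_j]-h_j$ in $\mD$ because the rescaling map is injective and the preimages stay in shrinking neighborhoods. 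The main obstacle, which I expect to require care, is precisely justifying the passage from the hypothesis $h(z)\neq 0$ (so the relevant limiting constant $c=h(z_0)$ or its rescaled analogue is nonzero) to the correct normalized limit equation, and ensuring that the exponent $\alpha=\mu_Q/\lambda_Q$ together with the pole-multiplicity assumption keeps the rescaling inside the valid range so that the limit $g$ is genuinely a non-constant meromorphic function. Once the count $\mu_Q+\lambda_Q(m-1)-1<\mu_Q+\lambda_Q(m-1)$ is contradicted in both the transcendental and rational cases, normality at $z_0$ follows, completing the proof of Lemma~\ref{lem:3}.
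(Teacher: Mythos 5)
There is a genuine gap, and it sits exactly where you flagged ``the main obstacle'': your choice of rescaling exponent is wrong, and the blow-up it creates is never resolved. Since $Q[f]=f^{x_0}(f^{x_1})^{(y_1)}\cdots(f^{x_k})^{(y_k)}$ and each derivative costs one power of $\rho_j$, the rescaled function $g_j(\zeta)=\rho_j^{-\alpha}f_j(z_j+\rho_j\zeta)$ satisfies $Q[g_j](\zeta)=\rho_j^{-\alpha\lambda_Q+y'}\,Q[f_j](z_j+\rho_j\zeta)$ with $y'=\mu_Q-\lambda_Q$. Your identity $\rho_j^{-\mu_Q}Q[f_j](z_j+\rho_j\zeta)=Q[g_j](\zeta)\cdot\rho_j^{\lambda_Q\alpha-\mu_Q}$ is therefore incorrect (it would need $y'=0$, excluded by hypothesis), and with your choice $\alpha=\mu_Q/\lambda_Q$ one gets $Q[g_j](\zeta)=\rho_j^{-\lambda_Q}Q[f_j](z_j+\rho_j\zeta)$, so the comparison function $\rho_j^{-\lambda_Q}h_j(z_j+\rho_j\zeta)$ diverges and no limit equation of the form $Q[g]=c$ with $c\neq 0$ emerges. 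Your fallback (``the limiting relation becomes $Q[g]\equiv 0$ unless the zeros accumulate'') is not a usable statement and does not feed into Corollary~\ref{cor:1} or Proposition~\ref{prop:2}, which require a \emph{nonzero} target. The correct normalization is $\alpha=(\mu_Q-\lambda_Q)/\lambda_Q=y'/\lambda_Q$, which makes $Q$ scale-invariant: $Q[F_j](\zeta)=Q[f_j](z_j+\rho_j\zeta)$ exactly, so that $Q[F_j](\zeta)-h_j(z_j+\rho_j\zeta)\to Q[F](\zeta)-h(0)$ with $h(0)\neq 0$, and Hurwitz transfers the bound of $\mu_Q+\lambda_Q(m-1)-1$ distinct zeros to $Q[F]-h(0)$. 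This $\alpha$ is positive and hence admissible in Lemma~\ref{lem:zp} because the $f_j$ are zero-free (the upper constraint $\alpha<m$ disappears), so your worry about the pole-multiplicity range is moot.

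Two further points you should make explicit. First, the Hurwitz transfer requires $Q[F]\not\equiv h(0)$; the paper secures this by citing Grahl's theorem that $Q[F]$ is non-constant when $F$ is non-constant, $x_0>0$ and $x_i\geq y_i$. Second, once the zero count is transferred, the endgame is exactly as you describe and matches the paper: Corollary~\ref{cor:1} (with the constant $h(0)$ as the small function) forces $F$ to be rational, and Proposition~\ref{prop:2} then produces at least $\mu_Q+\lambda_Q(m-1)$ distinct zeros, one more than allowed. So the architecture of your argument is the right one; the proof fails only, but fatally, at the normalization step.
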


\begin{proof}
Without loss generality, suppose that $\left\{f_j\right\}$ is not normal at $0\in \mD.$ Then by Lemma \ref{lem:zp}, there exists a sequence of points $\left\{z_j\right\}\subset\mD$ with $z_j\rightarrow 0,$ a sequence of positive real numbers satisfying $\rho_j\rightarrow 0,$ and a subsequence of $\left\{f_j\right\},$ again denoted by $\left\{f_j\right\}$ such that the sequence $$F_j(\zeta):=\frac{f_j(z_j+\rho_j\zeta)}{\rho_j^{(\mu_Q-\lambda_Q)/\lambda_Q}}\rightarrow F(\zeta),$$ spherically locally uniformly in $\mathbb{C},$ where $F\in\cM(\mC)$ is a non-constant and non-vanishing function having poles of multiplicity at least $m.$ Clearly, $Q[F_j]\rightarrow Q[F]$ spherically uniformly in every compact subset of $\mC$ disjoint from poles of $F.$ Also, one can easily see that $Q[F_j](\zeta)=Q[f_j](z_j+\rho_j\zeta).$ Thus, for every $\zeta\in\mC\setminus\left\{F^{-1}(\infty)\right\},$ $$Q[f_j](z_j+\rho_j\zeta)-h_j(z_j+\rho_j\zeta)=Q[F_j](\zeta)-h_j(z_j+\rho_j\zeta)\rightarrow Q[F](\zeta)-h(0)$$ spherically locally uniformly. Since $F$ is non-constant and $x_0>0,~x_i\geq y_i$ for all $i=1,~2,~\ldots,~k,$ by a result of Grahl \cite[Theorem 7]{grahl}, it follows that $Q[F]$ is non-constant. Next, we claim that $Q[F]-h(0)$ has at most $\mu_Q + \lambda_Q(m-1)-1$ zeros in $\mC.$ 

Suppose, on the contrary, that $Q[F]-h(0)$ has $\mu_Q + \lambda_Q(m-1)$ distinct zeros in $\mC,$ say $\zeta_i,~i=1,~2,~\ldots,~\mu_Q + \lambda_Q(m-1).$ Then by Hurwitz's theorem, there exit sequences $\zeta_{j, i},~i=1,~2,~\ldots,~\mu_Q + \lambda_Q(m-1)$ with $\zeta_{j, i}\rightarrow\zeta_i$ such that for sufficiently large $j,$ $Q[f_j](z_j+\rho_j\zeta_{j,i})-h_j(z_j+\rho_j\zeta_{j,i})=0$ for $i=1,~2,~\ldots,~\mu_Q + \lambda_Q(m-1).$ However, $Q[f_j]-h_j$ has at most $\mu_Q + \lambda_Q(m-1)-1$ distinct zeros in $\mD.$ This proves the claim. Now, from Corollary \ref{cor:1}, it follows that $F$ must be a rational function which contradicts Proposition \ref{prop:2}.
\end{proof}

\begin{prop}\label{prop:3}
Let $t$ be a positive integer. Let $\left\{f_j\right\}\subset\mathcal{M}(\mD)$ be a sequence of non-vanishing functions, all of whose poles have multiplicities at least $m,~m\in\mN$ and let $\left\{h_j\right\}\subset\mathcal{H}(\mD)$ be such that $h_j\rightarrow h$ locally uniformly in $\mD,$ where $h\in\cH(\mD)$ and $h(z)\neq 0.$ If, for every j, $Q[f_j](z)-z^th_j(z)$ has at most $\mu_Q + \lambda_Q(m-1)-1$ zeros in $\mD,$ then $\left\{f_j\right\}$ is normal in $\mD.$ 
\end{prop}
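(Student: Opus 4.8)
The plan is to localize the problem at the unique point where the target function $z^{t}h$ can vanish, namely the origin, and to treat that point by a renormalization tailored to this zero. First I would dispose of normality away from $0$. On any closed disk $\overline{D_r(z_0)}\subset\mD\setminus\{0\}$ the functions $z^{t}h_j$ converge locally uniformly to $z^{t}h$, which is zero‑free there since $h(z)\neq 0$ in $\mD$; hence Lemma \ref{lem:3}, applied with $z^{t}h_j$ in place of $h_j$, shows that $\{f_j\}$ is normal at $z_0$. Thus $\{f_j\}$ is normal in $\mD\setminus\{0\}$, and only normality at $0$ remains.

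Suppose, for contradiction, that $\{f_j\}$ is not normal at $0$. Since each $f_j$ is non‑vanishing, Lemma \ref{lem:xu} with exceptional function $\equiv 0$ furnishes a subsequence with $f_j\to 0$ locally uniformly in $\mD\setminus\{0\}$; in particular every pole of $f_j$, as well as the zero of the target, concentrates at the origin, so the non‑normality is genuinely of the $f_j\to 0$ type. I would then invoke the Zalcman–Pang Lemma \ref{lem:zp} at $0$ with the exponent $\alpha=(\mu_Q-\lambda_Q+t)/\lambda_Q=(y'+t)/\lambda_Q$, which is admissible by the remark following Lemma \ref{lem:zp} because each $f_j$ is zero‑free. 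This yields $z_j\to 0$, $\rho_j\to 0^{+}$ and $F_j(\zeta)=\rho_j^{-\alpha}f_j(z_j+\rho_j\zeta)\to F$ locally uniformly, where $F$ is non‑constant, non‑vanishing, and has poles of multiplicity at least $m$.

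The exponent $\alpha$ is chosen precisely so that the target survives in the limit. Using the homogeneity relation $Q[f_j](z_j+\rho_j\zeta)=\rho_j^{t}\,Q[F_j](\zeta)$ one obtains
\[
\rho_j^{-t}\big(Q[f_j]-z^{t}h_j\big)(z_j+\rho_j\zeta)=Q[F_j](\zeta)-(w_j+\zeta)^{t}\,h_j(z_j+\rho_j\zeta),
\]
where $w_j:=z_j/\rho_j$. If $w_j\to w_0$ is finite, the right‑hand side tends to $Q[F](\zeta)-p(\zeta)$ locally uniformly off the poles of $F$, with $p(\zeta):=h(0)(w_0+\zeta)^{t}$ a polynomial of degree $t$ and $p\not\equiv 0$ since $h(0)\neq 0$. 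If $F$ is transcendental, Corollary \ref{cor:1} (taking the small function $\psi=p$) shows $Q[F]-p$ has infinitely many zeros; if $F$ is rational, Proposition \ref{prop:2} shows it has at least $\mu_Q+\lambda_Q(m-1)$ distinct zeros. In either case Hurwitz's theorem then forces $Q[f_j]-z^{t}h_j$ to have at least $\mu_Q+\lambda_Q(m-1)$ distinct zeros in $\mD$ for large $j$, contradicting the hypothesis.

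The delicate point, which I expect to be the main obstacle, is controlling the ratio $w_j=z_j/\rho_j$, that is, ruling out $w_j\to\infty$. When $z_j/\rho_j\to\infty$ the origin is pushed to $\zeta=\infty$ in the rescaled variable, the factor $(w_j+\zeta)^{t}$ blows up, and the $\rho_j$‑scale limit no longer detects the target; geometrically, $Q[f_j]$ is then too small at scale $\rho_j$ to meet $z^{t}h_j\approx z_j^{t}h(0)$, so the genuine zeros of $Q[f_j]-z^{t}h_j$ live at a coarser scale. I would resolve this by re‑centering the renormalization at the zeros of $Q[f_j]-z^{t}h_j$ themselves. Their existence and abundance follow from the argument principle on a small circle $|z|=r$: since $Q[f_j]\to 0$ there while $|z^{t}h_j|$ is bounded below, the zero‑minus‑pole count equals $t$, and each pole of $f_j$ of order at least $m$ contributes to $Q[f_j]$ a pole of order at least $\mu_Q+\lambda_Q(m-1)$, so $Q[f_j]-z^{t}h_j$ has at least $t+\mu_Q+\lambda_Q(m-1)$ zeros counted with multiplicity, all accumulating at $0$. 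Blowing up at such a zero forces the bounded‑ratio regime and recovers the nontrivial polynomial target, completing the contradiction.
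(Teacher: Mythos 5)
Your reduction to the point $z=0$, the choice of rescaling exponent $\alpha=(t+\mu_Q-\lambda_Q)/\lambda_Q$ in the Zalcman--Pang lemma, and your treatment of the bounded-ratio case $z_j/\rho_j\to w_0\in\mC$ all match the paper's proof: there one translates to $G_j(\zeta)=\rho_j^{-\alpha}f_j(\rho_j\zeta)$, obtains $Q[G](\zeta)-\zeta^t$ with at most $\mu_Q+\lambda_Q(m-1)-1$ distinct zeros via Hurwitz, and contradicts Corollary \ref{cor:1} and Proposition \ref{prop:2} exactly as you do. (A minor point you skip: one must check $Q[F]-p\not\equiv 0$ before invoking Hurwitz.)

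The genuine gap is your treatment of the case $z_j/\rho_j\to\infty$, which you yourself identify as the main obstacle and then do not actually close. Two problems. First, your zero count via Rouch\'e produces at least $t+\mu_Q+\lambda_Q(m-1)$ zeros of $Q[f_j]-z^th_j$ \emph{counted with multiplicity} (and even this presupposes $f_j$ has a pole near $0$; if $f_j$ is pole-free near $0$ and $f_j\to 0$ on a circle, the maximum principle already gives normality, so this must be argued separately). The hypothesis only bounds the number of \emph{distinct} zeros, so a single zero of high multiplicity is perfectly consistent with it and no contradiction results from the count alone. Second, the phrase ``blowing up at such a zero forces the bounded-ratio regime'' is not an argument: the parameters $z_j,\rho_j$ are supplied by the Zalcman--Pang lemma and cannot be re-centered at a zero $\xi_j$ of $Q[f_j]-z^th_j$ while retaining a non-constant limit; nothing guarantees that $\rho_j^{-\alpha}f_j(\xi_j+\rho_j\zeta)$ converges, nor that $|\xi_j|/\rho_j$ stays bounded. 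The paper resolves this case by a second rescaling at the \emph{coarser} scale $z_j$ rather than $\rho_j$: setting $g_j(\zeta)=z_j^{-\alpha}f_j(z_j+z_j\zeta)$, one gets $Q[g_j](\zeta)-(1+\zeta)^th_j(z_j+z_j\zeta)$ with limit target $(1+\zeta)^t$, which is zero-free in $\mD$, so Lemma \ref{lem:3} applies and $\{g_j\}$ is normal there; the identity $F_j(\zeta)=(z_j/\rho_j)^{\alpha}g_j(\rho_j\zeta/z_j)$ then forces either $F\equiv\infty$ (if $g\not\equiv 0$) or $F$ a polynomial (if $g\equiv 0$, by estimating $F_j^{(n+1)}$), both contradicting that $F$ is non-constant and non-vanishing. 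Some such mechanism linking the two scales is indispensable, and your proposal does not supply it.
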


\begin{proof}
In view of Lemma \ref{lem:3}, it suffices to prove that $\mathcal{F}$ is normal at $z=0.$ Since $h(z)\neq 0$ in $\mD,$ it can be assumed that $h(0)=1.$ Now, suppose, on the contrary, that $\left\{f_j\right\}$ is not normal at $z=0.$ Then by Lemma \ref{lem:zp}, there exists a subsequence of $\left\{f_j\right\},$ which, for simplicity, is again denoted by $\left\{f_j\right\},$ a sequence of points $\left\{z_j\right\}\subset \mD$ with $z_j\rightarrow 0$ and a sequence of positive real numbers satisfying $\rho_j\rightarrow 0$ such that the sequence $$F_j(\zeta):=\frac{f_j(z_j+\rho_j\zeta)}{\rho_j^{(t+\mu_Q-\lambda_Q)/\lambda_Q}}\rightarrow F(\zeta)$$ spherically locally uniformly in $\mathbb{C},$ where $F\in\mathcal{M}(\mC)$ is a non-constant function. Also, since each $f_j$ is non-vanishing, it follows that $F$ is non-vanishing.
We now distinguish two cases.\\
{\bf Case 1:} Suppose that there exists a subsequence of $z_j/\rho_j,$ again denoted by $z_j/\rho_j,$ such that $z_j/\rho_j\rightarrow\infty$ as $j\rightarrow\infty.$

Define $$g_j(\zeta):=z_j^{-(t+\mu_Q-\lambda_Q)/\lambda_Q}f_j(z_j+z_j\zeta).$$ Then an elementary computation shows that 
	$$Q[g_j](\zeta)= z_j^{-t}Q[f_j](z_j+z_j\zeta),$$
and hence 
\begin{align*}Q[f_j](z_j+z_j\zeta)&-(z_j+z_j\zeta)^th_j(z_j+z_j\zeta)\\
&=z_j^tQ[g_j](\zeta)-(z_j+z_j\zeta)^th_j(z_j+z_j\zeta)\\
&=z_j^t\left[Q[g_j](\zeta)-(1+\zeta)^th_j(z_j+z_j\zeta)\right].
\end{align*}

Since $(1+\zeta)^th_j(z_{j}+z_{j}\zeta)\rightarrow(1+\zeta)^t\neq 0$ in $\mD,$ and $Q[f_j](z_j+z_j\zeta)-z_j^t(1+\zeta)^th_j(z_j+z_j\zeta)$ has at most $\mu_Q + \lambda_Q(m-1)-1$ zeros in $\mD,$ by Lemma \ref{lem:3}, it follows that $\left\{g_j\right\}$ is normal in $\mD$ and so there exists a subsequence of $\left\{g_j\right\},$ again denoted by $\left\{g_j\right\},$ such that $g_j\rightarrow g$ spherically locally uniformly in $\mD,$ where $g\in\cM(\mD)$ or $g\equiv \infty.$  If $g\equiv\infty,$ then 
\begin{align*}
 F_j(\zeta)&=\rho_{j}^{-(t+\mu_Q-\lambda_Q)/\lambda_Q}f_j(z_j+\rho_j\zeta)\\
&=\left(\frac{z_j}{\rho_j}\right)^{(t+\mu_Q-\lambda_Q)/\lambda_Q}z_j^{-(t+\mu_Q-\lambda_Q)/\lambda_Q}f_j(z_j+\rho_j\zeta)\\
&=\left(\frac{z_j}{\rho_j}\right)^{(t+\mu_Q-\lambda_Q)/\lambda_Q}g_j\left(\frac{\rho_j}{z_j}\zeta\right) 
\end{align*}
converges spherically locally uniformly to $\infty$ in $\mC$ showing that $F\equiv\infty,$ a contradiction to the fact that $F$ is non-constant. Since $g_j(\zeta)\neq 0,$ it follows that either $g(\zeta)\neq 0$ or $g\equiv 0.$ If $g(\zeta)\neq 0,$ then by the previous argument, we find that $F\equiv\infty,$ a contradiction. If $g\equiv 0,$ then choose $n\in\mN$ such that $n+1>(t+\mu_Q-\lambda_Q)/(\lambda_Q).$ Thus, for each $\zeta\in\mC,$ we have 
\begin{align*}
F_j^{(n+1)}(\zeta)&=\rho_{j}^{-(t+\mu_Q-\lambda_Q)/\lambda_Q+n+1}f_j^{(n+1)}(z_j+\rho_j\zeta)\\
&=\left(\frac{\rho_j}{z_j}\right)^{-(t+\mu_Q-\lambda_Q)/\lambda_Q+n+1}g_j^{(n+1)}\left(\frac{\rho_j}{z_j}\zeta\right).
\end{align*}
Therefore, $F_j^{(n+1)}(\zeta)\rightarrow 0$ spherically uniformly which implies that $F$ is a polynomial of degree at most $n,$ a contradiction to the fact that $F$ is non-constant and non-vanishing meromorphic function.\\
{\bf Case 2:} Suppose that there exists a subsequence of $z_j/\rho_j,$ again denoted by $z_j/\rho_j,$ such that $z_j/\rho_j\rightarrow\alpha$ as $j\rightarrow\infty,$ where $\alpha\in\mathbb{C}.$ Then $$G_j(\zeta)=\rho_j^{-(t+\mu_Q-\lambda_Q)/\lambda_Q}f_j(\rho_j\zeta)=F_j\left(\zeta-\frac{z_j}{\rho_j}\right)\rightarrow F(\zeta-\alpha):= G(\zeta),$$ spherically locally uniformly in $\mC.$ Clearly, $G(\zeta)\neq 0.$ Also, it is easy to see that $Q[G_j](\zeta)=\rho_j^{-t}Q[f_j](\rho_j\zeta).$ Thus, $$Q[G_j](\zeta)-\zeta^t h_j(\rho_j\zeta)=\frac{Q[f_j](\rho_j\zeta)-(\rho_j\zeta)^t h_j(\rho_j\zeta)}{\rho_j^t}\rightarrow Q[G](\zeta)-\zeta^t$$ spherically uniformly in every compact subset of $\mathbb{C}$ disjoint from the poles of $G.$ Clearly, $Q[G](\zeta)\not\equiv\zeta^t,$ otherwise $G$ has to be a polynomial, which is not possible since $G(\zeta)\neq 0.$ Since $Q[f_j](\rho_j\zeta)-(\rho_j\zeta)^t h_j(\rho_j\zeta)$ has at most $\mu_Q + \lambda_Q(m-1)-1$ distinct zeros in $\mD,$ it follows that $Q[G](\zeta)-\zeta^m$ has at most $\mu_Q + \lambda_Q(m-1)-1$ distinct zeros in $\mC$ and hence by Corollary \ref{cor:1}, $G$ must be a rational function. However, this contradicts Proposition \ref{prop:2}. Hence $\cF$ is normal in $\mD.$
\end{proof}

\section{Proof of Theorem \ref{thm:1}}
We use ideas from Deng et al. \cite{deng-1} to prove this result. By virtue of Lemma \ref{lem:3}, it is sufficient to prove the normality of $\left\{f_j\right\}$ at points $z\in\mD$ where $h(z)=0.$ Without loss of generality, we assume that $h(z)=z^t a(z),$ where $t\in\mN,~a\in\cH(\mD),~a(z)\neq 0$ and $a(0)=1.$ Further, since $h_j\rightarrow h$ locally uniformly in $\mD,$ we can assume that $$h_j(z)=(z-z_{j,1})^{t_1}(z-z_{j,2})^{t_2}\cdots (z-z_{j,l})^{t_l} a_j(z),$$ where $\sum\limits_{i=1}^{l}t_i=t,$ $z_{j,i}\rightarrow 0$ for $1\leq i\leq l$ and $a_j(z)\rightarrow a(z)$ locally uniformly in $\mD.$ Again, we may assume that $z_{j,1}=0,$ since $\left\{f_j(z)\right\}$ is normal in $\mD$ if and only if $\left\{f_j(z+z_{j,1})\right\}$ is normal in $\mD$ (see \cite[p. 35]{schiff}). Now, we shall prove the normality of $\left\{f_j\right\}$ at $z=0$ by applying the principle of mathematical induction on $t.$\\
Note that if $t=1,$ then $l=1$ and so $h_j(z)=za_j(z).$ Thus, by Proposition \ref{prop:3}, $\left\{f_j\right\}$ is normal at $z=0.$ Also, if $l=1,$ then $h_j(z)=z^ta_j(z),$ and again by Proposition \ref{prop:3}, $\left\{f_j\right\}$ is normal at $z=0.$ So, let $l\geq 2$ and for $n\in\mN$ with $1<t<n,$ suppose that $\left\{f_j\right\}$ is normal at $z=0.$ In accordance with the principle of mathematical induction, we only need to show that $\left\{f_j\right\}$ is normal at $z=0$ when $n=t.$\\
By rearranging the zeros of $h_j,$ if necessary, we can assume that $|z_{j,i}|\leq |z_{j,l}|$ for $2\leq i\leq l.$ Let $z_{j,l}=w_j.$ Then $w_j\rightarrow 0.$ Define $$g_j(z):=\frac{f_j(w_jz)}{w_j^{(t+\mu_Q-\lambda_Q)/\lambda_Q}} \mbox{ and } v_j(z):=\frac{h_j(w_jz)}{w_j^t},~z\in D_{r_j}(0),~r_j\rightarrow\infty.$$ Then an easy computation shows that $Q[g_j](z)=w_j^{-t}Q[f_j](w_jz)$ and $$v_j(z)=z^{t_1}\left(z-\frac{z_{j,2}}{w_j}\right)^{t_2}\cdots\left(z-\frac{z_{j,l-1}}{w_j}\right)^{t_{l-1}}(z-1)^{t_{l}}a_j(w_jz)\rightarrow v(z)$$ locally uniformly in $\mC.$ Clearly, $0$ and $1$ are two distinct zeros of $v$ and hence all zeros of $v$ have multiplicities at most $t-1.$ Since 
\begin{equation}\label{eq:t1}
Q[g_j](z)-v_j(z)=\frac{Q[f_j](w_jz)-h_j(w_jz)}{w_j^{t}}
\end{equation}
 and $Q[f_j](w_jz)-h_j(w_jz)$ has at most $\mu_Q + \lambda_Q(m-1)-1$ distinct zeros, it follows that $Q[g_j](z)-v_j(z)$ has at most $\mu_Q + \lambda_Q(m-1)-1$ distinct zeros in $\mC.$ Thus, by induction hypothesis, we find that $\left\{g_j\right\}$ is normal in $\mC.$ Suppose that $g_j\rightarrow g$ spherically locally uniformly in $\mC.$ Then either $g\in\cM(\mC)$ or $g\equiv\infty.$\\
{\bf Case 1:} $g\in\cM(\mC).$\\
Since $g_j(z)\neq 0,$ it follows that either $g(z)\neq 0$ or $g\equiv 0.$ First, suppose that $g(z)\neq 0.$ Since $g_j\rightarrow g$ spherically locally uniformly in $\mC,$ it follows that $Q[g_j]\rightarrow Q[g]$ in every compact subset of $\mC$ disjoint from the poles of $g.$ Then from \eqref{eq:t1}, we find that $Q[g]-v$ has at most $\mu_Q + \lambda_Q(m-1)-1$ distinct zeros in $\mC$ and thus by Corollary \ref{cor:1} and Proposition \ref{prop:2}, $g$ has to be a constant.\\

Next, we claim that $\left\{f_j\right\}$ is holomorphic in $D_{\delta/2}(0)$ for some $\delta\in (0,1).$ Suppose, on the contrary, that $\left\{f_j\right\}$ is not holomorphic in $D_{\delta/2}(0)$ for any $\delta\in (0,1).$ Then there exists sequence $\eta_j\in D_{\delta/2}(0)$ such that $\eta_j\rightarrow 0$ and $f_j(\eta_j)=\infty.$ Assume that $\eta_j$ has the smallest modulus among the poles of $f_j.$ It is easy to see that $\eta_j/w_j\rightarrow\infty,$ otherwise, $$f_j(\eta_j)=w_j^{(t+\mu_Q-\lambda_Q)/\lambda_Q}g_j(\eta_j/w_j)\rightarrow 0, \mbox{ a contradiction }.$$ Let $$\psi_j(z):=\frac{f_j(\eta_jz)}{\eta_j^{(t+\mu_Q-\lambda_Q)/\lambda_Q}} \mbox{ and } u_j(z):= \frac{h_j(\eta_jz)}{\eta_j^{t}},~z\in D_{r_j}(0),~r_j\rightarrow\infty.$$ Then 
\begin{equation}\label{eq:t2}
Q[\psi_j](z)-u_j(z)=\frac{Q[f_j](\eta_jz)-h_j(\eta_jz)}{\eta_j^{t}}
\end{equation} 
and $$u_j(z)=z^{t_1}\left(z-\frac{z_{j,2}}{\eta_j}\right)^{t_2}\cdots\left(z-\frac{w_j}{\eta_j}\right)^{t_{l}}a_j(\eta_jz)\rightarrow z^t$$ locally uniformly in $\mC.$ From Lemma \ref{lem:3}, it follows that $\left\{\psi_j\right\}$ is normal in $\mC\setminus\left\{0\right\}.$ Since $\psi_j(z)\neq 0$ and $\psi_j$ is holomorphic in $\mD,$ one can easily see that $\left\{\psi_j\right\}$ is normal in $\mD$ and hence in $\mC.$ Assume that $\psi_j\rightarrow\psi$ spherically locally uniformly in $\mC,$ where $\psi\in\cM(\mC)$ or $\psi\equiv\infty.$ Since $$\psi_j(0)=\frac{f_j(0)}{\eta_j^{(t+\mu_Q-\lambda_Q)/\lambda_Q}}=\left(\frac{w_j}{\eta_j}\right)^{(t+\mu_Q-\lambda_Q)/\lambda_Q}g_j(0)\rightarrow 0,$$ therefore, $\psi\not\equiv\infty.$ Also, since $\psi_j(z)\neq 0,$ we have $\psi(z)\neq 0$ or $\psi\equiv 0.$ However, the latter is not possible since $\infty=\psi_j(1)\rightarrow\psi(1)=\infty.$ Thus, $\psi(z)\neq 0.$ Note that $Q[\psi_j](z)-u_j(z)\rightarrow Q[\psi](z)-z^t$ spherically uniformly in every compact subset of $\mC$ disjoint from the poles of $\psi,$ so by \eqref{eq:t2}, we conclude that $Q[\psi](z)-z^t$ has at most $\mu_Q + \lambda_Q(m-1)-1$ distinct zeros in $\mC.$ By Corollary \ref{cor:1} and Proposition \ref{prop:2}, $\psi$ reduces to a constant, which contradicts the fact that $\infty=\psi_j(1)\rightarrow\psi(1)=\infty.$ Hence $\left\{f_j\right\}$ is holomorphic in $D_{\delta/2}(0).$ Since $f_j(z)\neq 0,$ it follows that $\left\{f_j\right\}$ is normal   at $z=0.$\\
 Next, suppose that $g\equiv 0.$ Then by the preceding discussion, one can easily see that $\left\{f_j\right\}$ is holomorphic in $D_{\delta/2}(0)$ and hence $\left\{f_j\right\}$ is normal at $z=0.$\\ 
{\bf Case 2:} $g\equiv\infty.$\\
Let $\phi_j(z):=f_j(z)/z^{(t+\mu_Q-\lambda_Q)/\lambda_Q}.$ Then $1/\phi_j(0)=0.$\\
{\bf Subcase 2.1:} When $\left\{1/\phi_j\right\}$ is normal at $z=0.$\\
Then $\left\{\phi_j\right\}$ is normal at $z=0$ and so there exists $r>0$ with $D_{r}(0)\subseteq\mD$ such that $\left\{\phi_j\right\}$ is normal in $D_{r}(0).$ Assume that $\phi_j\rightarrow\phi$ spherically locally uniformly. Since $\phi_j(0)=\infty,$ there exists $\rho>0$ such that, for sufficiently large $j,$ $|\phi_j(z)|\geq 1$ in $D_{\rho}(0)\subset D_{r}(0).$ Also, since $f_j(z)\neq 0$ in $D_{\rho}(0),$ $1/f_j$ is holomorphic in $D_\rho(0)$ and hence $$\left|\frac{1}{f_j(z)}\right|=\left|\frac{1}{\phi_j(z)}\cdot\frac{1}{z^{(t+\mu_Q-\lambda_Q)/\lambda_Q}}\right|\leq\left(\frac{2}{\rho}\right)^{(t+\mu_Q-\lambda_Q)/\lambda_Q} \mbox{ in } \partial D_{\rho/2}(0).$$ Then by the maximum principle and Montel's theorem \cite[p.35]{schiff}, we conclude that $\left\{f_j\right\}$ is normal at $z=0.$\\
{\bf Subcase 2.2:} When $\left\{1/\phi_j\right\}$ is not normal at $z=0.$\\
 By Montel's theorem, it follows that, for every $\epsilon>0,$ $\left\{1/\phi_j(z)\right\}$ is not locally uniformly bounded in $D_{\epsilon}(0).$ Therefore, we can find a sequence $\epsilon_j\rightarrow 0$ such that $1/\phi_j(\epsilon_j)\rightarrow\infty.$ Since $\left|1/\phi_j\right|$ is continuous, there exists $b_j\rightarrow$ such that $\left|1/\phi_j(b_j)\right|=1.$\\
 Let $$K_j(z):=\frac{f_j(b_jz)}{b_j^{(t+\mu_Q-\lambda_Q)/\lambda_Q}},~z\in D_{r_j}(0),~r_j\rightarrow\infty \mbox{ and } q_j(z):=\frac{h_j(b_jz)}{b_j^t}.$$ Then $K_j(z)\neq 0$ and a simple computation shows that $$Q[K_j](z)=\frac{Q[f_j](b_jz)}{b_j^{t}} \mbox{ and } q_j(z)=z^{t_1}\left(z-\frac{z_{j,2}}{b_j}\right)^{t_2}\cdots\left(z-\frac{w_{j}}{b_j}\right)^{t_l}a_j(b_jz).$$ Note that $$g_j\left(\frac{b_j}{w_j}\right)=\frac{f_j(b_j)}{w_j^{(t+\mu_Q-\lambda_Q)/\lambda_Q}}=\frac{f_j(b_j)}{b_j^{(t+\mu_Q-\lambda_Q)/\lambda_Q}}\cdot\left(\frac{b_j}{w_j}\right)^{(t+\mu_Q-\lambda_Q)/\lambda_Q}\rightarrow\infty.$$ Since $|1/\phi_j(b_j)|=1$ and $(t+\mu_Q-\lambda_Q)/\lambda_Q>0,$ it follows that $b_j/w_j\rightarrow\infty,$ and hence $w_j/b_j\rightarrow 0.$ This implies that $q_j(z)\rightarrow z^t$ locally uniformly in $\mC.$ Further, since $Q[K_j](z)-q_j(z)=(Q[f_j](b_jz)-h_j(b_jz))/b_j^t,$ it follows that $Q[K_j]-q_j$ has at most $\mu_Q + \lambda_Q(m-1)-1$ distinct zeros in $\mC$ and hence by Lemma \ref{lem:3}, $\left\{K_j\right\}$ is normal in $\mC\setminus\left\{0\right\}.$ We claim that $\left\{K_j\right\}$ is normal in $\mC.$ Suppose otherwise. Then by Lemma \ref{lem:xu}, there is a subsequence of $\left\{K_j\right\}$, which for the sake of convenience, is again denoted by $\left\{K_j\right\}$ such that $K_j(z)\rightarrow 0$ in $\mC\setminus\left\{0\right\},$ which is not possible since $|K_j(1)|=1.$ This establishes the claim. Now, suppose that $K_j\rightarrow K$ spherically locally uniformly in $\mC.$ It is evident that $K(z)\neq 0$ in $\mC$ and $K\not\equiv\infty,$ as $K(1)=1.$ Then $Q[K_j]\rightarrow Q[K]$ spherically  uniformly in every compact subset of $\mC$ disjoint from the poles of $K.$ Since $Q[K_j]-q_j$ has at most $\mu_Q + \lambda_Q(m-1)-1$ distinct zeros in $\mC,$ it follows that $Q[K]-z^t$ has at most $\mu_Q + \lambda_Q(m-1)-1$ distinct zeros in $\mC$ and so by Corollary \ref{cor:1} and Proposition \ref{prop:2}, $K$ reduces to a constant. Using the same arguments as in Case 1, we find that $\left\{f_j\right\}$ is normal at $z=0.$ This completes the induction process and hence the proof.

\begin{flushleft}
{\bf Acknowledgment.} The author is thankful to Prof. N. V. Thin for a careful reading of the manuscript and pointing out references \cite{thin-1}, \cite{thin-2} and \cite{thin-3}.

\end{flushleft}




\bibliographystyle{amsplain}

\end{document}